\def\journal@name{}
\theoremstyle{plain}
\newtheorem{thm}{Theorem}
\newtheorem*{defi}{Definition}
\newtheorem{prop}[thm]{Proposition}
\newtheorem{lem}[thm]{Lemma}
\newtheorem{cor}[thm]{Corollary}
\newtheorem{rem}[thm]{Remark}
\newcommand{\RR}{\mathbb{R}}
\renewcommand{\SS}{\mathbb{S}}
\newcommand{\oo}{\mathbf{1}}
\newcommand{\dd}{\mathrm{d}}
\newcommand{\DR}{\mathrm{R}}
\newcommand{\PP}{\mathbb{P}}
\DeclareMathOperator{\var}{Var}
\DeclareMathOperator{\tr}{tr}
\newcommand{\lc}{\ensuremath \vartriangleleft}
\begin{document}

\begin{frontmatter}
\title{Improved log-concavity for rotationally invariant measures of symmetric
convex sets}
\runtitle{Improved log-concavity for rotationally invariant measures}

\begin{aug}
\author[A]{\fnms{Dario} \snm{Cordero-Erausquin}\ead[label=e1]{dario.cordero@imj-prg.fr}}
\and
\author[B]{\fnms{Liran} \snm{Rotem}\ead[label=e2]{lrotem@technion.ac.il}}
\address[A]{Institut de Mathématiques de Jussieu, Sorbonne Université, \printead{e1}}

\address[B]{Faculty of Mathematics, Technion -- Israel Institute of Technology, \printead{e2}}
\end{aug}

\begin{abstract}
We prove that the (B) conjecture and the Gardner-Zvavitch conjecture
are true for all log-concave measures that are rotationally invariant,
extending previous results known for Gaussian measures. Actually,
our result apply beyond the case of log-concave measures, for instance
to Cauchy measures as well. For the proof, new sharp weighted Poincar\'e inequalities
are obtained for even probability measures that are log-concave with
respect to a rotationally invariant measure. 
\end{abstract}

\begin{keyword}[class=MSC]
\kwd[Primary ]{52A40}
\kwd[; secondary ]{60E15}
\kwd{26D10}
\end{keyword}

\begin{keyword}
\kwd{(B) conjecture}
\kwd{Gardner--Zvavitch conjecture}
\kwd{log-concavity}
\kwd{Brunn--Minkowski}
\kwd{Brascamp--Lieb inequality}
\kwd{Poincaré inequality}
\end{keyword}

\end{frontmatter}


\section{Introduction and main results}

Improved log-concavity inequalities under the assumption of symmetry
have become a central topic in the Brunn-Minkowski theory of convex
bodies, with several fascinating consequences and conjectures. Maybe
one of the first appearances of this phenomenon was the (B) inequality
established in \cite{Cordero-Erausquin2004} for a centered Gaussian
measure $\gamma$ on $\RR^{n}$. It states that for a symmetric convex
set $K\subset\RR^{n}$ (here symmetry means origin-symmetry, that
is $K=-K$) the function
\begin{equation}
t\to\gamma(e^{t}K)\qquad\textrm{ is log-concave on \ensuremath{\RR}. }\label{eq:b-ineq}
\end{equation}

A nonnegative function $m$ is said to be log-concave if $\left(-\log m\right)$
is a convex function with values in $\RR\cup\left\{ +\infty\right\} $.
The indicator of a convex set $C$ is a log-concave function and it
is common to denote the corresponding convex function by $\oo_{C}^{\infty}:=-\log\oo_{C}$, which is equal to $0$ on $K$ and to $+\infty$ outside. 

Property (\ref{eq:b-ineq}) was first conjectured by Banaszczyk and
popularized by Latała (\cite{Latala2002}). It found applications
outside Brunn-Minkowski theory, for instance in the setting of small
ball estimates in high dimensions, see \cite{Latala2005}, \cite{Klartag2007d}. 

A Borel measure $\mu$ on $\RR^{n}$ is said to satisfy the (B) property
if (\ref{eq:b-ineq}) holds for every symmetric convex set $K\subset\RR^{n}$
with $\mu$ in place of $\gamma.$ It is conjectured that every even
log-concave measure $\mu$, and by this we mean $\dd\mu(x)=e^{-V(x)}\dd x$
with $V$ convex and even, satisfies the (B) property. Prékopa's theorem
(\cite{Prekopa1971}) implies that every log-concave measure $\mu$
satisfies a Brunn-Minkowski inequality: For all convex sets $K,L$
the function 
\begin{equation}
[0,1]\ni t\to\mu((1-t)K+tL)\qquad\textrm{is log-concave.}\label{eq:BM0}
\end{equation}
This immediately implies that the function $s\to\mu(sK)$ is log-concave
on $\RR^{+}$. The conjecture is a strengthening of this property
under the extra assumption of symmetry. Saroglou (\cite{Saroglou2016})
showed that this (B) conjecture follows from another celebrated conjecture
for symmetric convex sets, namely the log-Brunn-Minkowski conjecture
 (\cite{Boroczky2012}). Combining the results of \cite{Saroglou2016}
and \cite{Boroczky2012} it follows that the conjecture holds in $\RR^{2}$.
Conversely, a certain strong form of the (B) conjecture will also imply
the log-Brunn-Minkowski conjecture (\cite{Saroglou2014}). 

In dimension $n\ge3$ very few examples of measures verifying the
(B) inequality were known and until now they all somehow relied on
the result for the Gaussian measure; These few non-Gaussian examples were
obtained by Eskenazis, Nayar and Tkocz in \cite{Eskenazis2018} and
will be discussed in Section \ref{sec:mixtures}. 

In a similar vein, a striking recent result of Eskenazis and Moschidis
(\cite{Eskenazis2021}) gives the following improvement of the log-concavity
(\ref{eq:BM0}) to $1/n$-concavity for a centered Gaussian measure
$\gamma$: if $K$ and $L$ are symmetric convex sets in $\RR^{n}$
and $\lambda\in[0,1],$ then 
\begin{equation}
\gamma((1-\lambda)K+tL)^{1/n}\ge(1-\lambda)\gamma(K)^{1/n}+\lambda\gamma(L)^{1/n}.\label{eq:GZ}
\end{equation}

This implies that the function $[0,1]\ni t\to\gamma((1-t)K+tL)^{1/n}$
is concave (because we work with convex sets). This property was conjectured
by Gardner and Zvavitch (\cite{Gardner2010}), and again it is connected
to several natural problems in the geometry of convex bodies. Note
that here the prototype of a measure satisfying this property is the
Lebesgue measure restricted to a convex set, by the Brunn-Minkowski
inequality. It is remarkable that the Gaussian measure also behaves
this way. One can ask whether every even log-concave measure satisfies
this Gardner-Zvavitch property. This conjecture will again be a corollary
of the log-Brunn-Minkowski conjecture (\cite{Livshyts2017a}), so
in particular it is known to hold in $\RR^{2}$. Building on earlier
ideas of Kolesnikov and Milman (\cite{Kolesnikov2018}, \cite{Kolesnikov2022}),
Kolesnikov and Livshyts (\cite{Kolesnikov2021}) proposed a convenient
spectral inequality that allowed them to show (\ref{eq:GZ}) with
exponent $\frac{1}{2n}$ (see Theorem \ref{thm:kolesnikov-livshyts}
below). This was improved to the optimal exponent $\frac{1}{n}$ in
\cite{Eskenazis2021}. In \cite{Colesanti2017} it was shown that
(\ref{eq:GZ}) holds for arbitrary rotation invariant log-concave
measures instead of $\gamma$, but only when $K$ and $L$ are small
perturbations of a ball. In \cite{Livshyts2021} Livshyts proved (\ref{eq:GZ})
for all even log-concave measures, but with the optimal exponent
$\frac{1}{n}$ replaced with a worse exponent $c_{n}=\frac{1}{n^{4+o(1)}}$. 

In the present paper, we show that there is nothing special about
the Gaussian measure and that properties (\ref{eq:b-ineq}) and (\ref{eq:GZ})
hold for every rotationally invariant log-concave measure on $\RR^{n},$
providing the first large class of measures on $\RR^{n}$ beyond Gaussian
measures satisfying the (B) conjecture and the Gardner-Zvavitch conjecture.
Actually, we will go beyond log-concave measures; for instance, we
will show that the Cauchy measures also satisfy these properties. 

Let us fix some notation in order to state our results. We consider
a finite dimensional Euclidean space $(\RR^{n},|\cdot|,\langle\cdot,\cdot\rangle)$.
For notational convenience, we assume we work with the standard structure
– note that the problems we study are affine invariant. A Borel measure
$\mu$ is rotationally invariant if $\mu(A)=\mu(RA)$ for every Borel
set $A$ and every linear map $R\in O(n)$. Since we are only considering
measures that are absolutely continuous with respect to the Lebesgue
measure $\dd x$, this means that we are considering measures of the
form 
\[
\dd\mu(x)=e^{-w(|x|)}\dd x
\]
for some function $w:\RR^{+}\to\RR\cup\left\{ +\infty\right\} $.
In this setting, we have
\begin{eqnarray*}
\mu\textrm{ is log-concave} & \Leftrightarrow & \ensuremath{x\to w(|x|)}\ensuremath{\textrm{ is convex on \ensuremath{\RR^{n}}}}\\
 & \Leftrightarrow & \ensuremath{w}\ensuremath{\textrm{ is increasing and convex on \ensuremath{\RR^{+}}}}\\
 & \Rightarrow & w\ensuremath{\textrm{ increasing and }t\to w(e^{t})}\textrm{ is convex on \ensuremath{\RR}.}
\end{eqnarray*}

This last condition will prove to be sufficient for establishing the
results (here and everywhere in the paper, "increasing" is understood in the weak sense, i.e. as non-decreasing). 
Note that for a smooth $w,$ the log-concavity of $\mu$
amounts to the conditions  $w'\ge0,\,w''\ge0$, whereas our weaker
assumption is equivalent to $w'\ge0,\,sw''(s)+w'(s)\ge0$. Also, unless
$w$ is constant (which is not a situation of interest) we will always
have that $w(t)\to+\infty$ as $t\to+\infty.$

Throughout the paper, the notion of "being log-concave with respect to a measure" will play an important role, and so let us introduce a specific notation for that. 

\begin{defi}[Log-concavity preorder $\lc$ on measures]
Given two Borel measures $\mu,\nu$ on $\RR^n$, we say that $\nu$ is log-concave with respect to $\mu$, and write $\nu\lc\mu$ if $\nu$ has a log-concave density with respect to $\mu$, that is
$$\dd\nu(x)=e^{-v(x)}\, \dd \mu(x)$$
with $v:\RR^n\to\RR\cup\{+\infty\}$ convex. 
\end{defi}
Of course, for any measure $\mu$ and any constant $c>0$ we have $c\, \mu\lc\mu$; the central example is $\mu_K\lc\mu$ where $\mu_K$ is the restriction of a measure $\mu$ to a convex set $K\subset\RR^n$ defined by $\mu_K(A)=\mu(A\cap K)$, possibly renormalized into a probability measure when $\mu(K)<\infty$. In Bakry-Emery comparison geometry terms, one could say that $\nu\lc\mu$ means that  $\nu$ is \emph{more curved} than $\mu$, although one must pay attention that no curvature is \emph{a priori} imposed on $\mu$. It is nonetheless natural to ask whether some (well chosen) functional inequality is valid for the whole class of such $\nu$'s. Our main contribution in this direction for rotationally invariant measures will be Theorem~\ref{thm:weighted-poincare} below.

We begin with the (B) conjecture. Actually, we are able to extend
the same \emph{strong} form that was established for the Gaussian
measure to every log-concave (and beyond) rotationally invariant measure. 
\begin{thm}
\label{thm:B-result}Let $w:(0,\infty)\to(-\infty,\infty]$ be an
increasing function such that $t\to w(e^{t})$ is convex, and let
$\mu$ be the measure on $\RR^{n}$ with density $e^{-w\left(\left|x\right|\right)}$.
Then for every symmetric convex body $K\subseteq\RR^{n}$, and every
\emph{symmetric} matrix $A$, the function 
\[
t\mapsto\mu\left(e^{tA}K\right)
\]
 is log-concave. 
\end{thm}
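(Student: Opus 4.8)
\emph{Strategy.} The plan is to differentiate $t\mapsto\mu(e^{tA}K)$ twice and reduce the statement to a single sharp spectral estimate, the weighted Poincar\'e inequality of Theorem~\ref{thm:weighted-poincare}. Since $e^{(t+s)A}=e^{sA}e^{tA}$ and $e^{sA}$ is an invertible linear map commuting with $x\mapsto-x$, the set $e^{sA}K$ is again a symmetric convex body; hence the asserted log-concavity on $\RR$ is equivalent to
\[
\frac{\dd^{2}}{\dd t^{2}}\Big|_{t=0}\log\mu(e^{tA}K)\le0
\]
holding for \emph{every} symmetric convex body $K$ (and every symmetric $A$). By a routine approximation — replacing $w$ by smooth functions still satisfying $w'\ge0$ and $sw''(s)+w'(s)\ge0$ (possible since these amount to ``$w$ increasing and $t\mapsto w(e^{t})$ convex'', which survive suitable mollification) and $K$ by smooth strictly convex symmetric bodies — we may assume $w\in C^{\infty}$ and $\partial K$ smooth, so that $t\mapsto\mu(e^{tA}K)$ is smooth; the general case then follows, a pointwise limit of positive log-concave functions being log-concave.

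\emph{The second derivative.} Changing variables $x=e^{tA}y$, and using that $A$ is symmetric (so $|e^{tA}y|^{2}=\langle e^{2tA}y,y\rangle$), gives $\mu(e^{tA}K)=e^{t\tr A}\int_{K}e^{-w(|e^{tA}y|)}\,\dd y$, which can be differentiated under the integral sign; equivalently and more transparently, one applies the transport/divergence theorem to the flow with velocity field $\theta(x)=Ax=\nabla u_{A}(x)$, where $u_{A}(x):=\tfrac12\langle Ax,x\rangle$. Set
\[
h_{A}:=L_{\mu}u_{A}=\tr A-w'(|x|)\,\frac{\langle Ax,x\rangle}{|x|},\qquad L_{\mu}f:=\Delta f-\langle\nabla(w(|x|)),\nabla f\rangle,
\]
and let $\bar\mu$ be the probability measure proportional to $\oo_{K}\,\dd\mu$, i.e. $\mu_{K}/\mu(K)$. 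Then two applications of the transport theorem (equivalently, two differentiations under the integral) give
\[
\frac{\dd^{2}}{\dd t^{2}}\Big|_{t=0}\log\mu(e^{tA}K)=\var_{\bar\mu}(h_{A})+\int_{K}\langle\nabla h_{A},Ax\rangle\,\dd\bar\mu .
\]
Computing $\nabla h_{A}$ explicitly and splitting $\nabla u_{A}=Ax$ into its radial part $\partial_{r}u_{A}=\langle Ax,x\rangle/|x|$ and its spherical part $\nabla_{\mathrm{sph}}u_{A}=Ax-(\partial_{r}u_{A})\,x/|x|$, the last integral turns into a nonnegative weighted Dirichlet energy,
\[
-\int_{K}\langle\nabla h_{A},Ax\rangle\,\dd\bar\mu=\int_{K}\!\Big[\big(w''(|x|)+\tfrac{w'(|x|)}{|x|}\big)(\partial_{r}u_{A})^{2}+\tfrac{2w'(|x|)}{|x|}\,|\nabla_{\mathrm{sph}}u_{A}|^{2}\Big]\dd\bar\mu=:\mathcal E_{\mu}(u_{A}),
\]
the two coefficients being nonnegative precisely because $w'\ge0$ and $|x|\,w''(|x|)+w'(|x|)\ge0$ — the point where the hypothesis on $w$ enters. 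It thus remains to prove $\var_{\bar\mu}(h_{A})\le\mathcal E_{\mu}(u_{A})$.

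\emph{Conclusion.} Since $\dd\bar\mu=\tfrac1{\mu(K)}e^{-\oo_{K}^{\infty}}\,\dd\mu$ with $\oo_{K}^{\infty}$ convex and even ($K=-K$), the probability measure $\bar\mu$ satisfies $\bar\mu\lc\mu$ and is even, and $u_{A}$ is even; hence $\var_{\bar\mu}(h_{A})=\var_{\bar\mu}(L_{\mu}u_{A})\le\mathcal E_{\mu}(u_{A})$ is precisely the instance of the weighted Poincar\'e inequality of Theorem~\ref{thm:weighted-poincare} obtained by taking $\nu=\bar\mu$ and $u=u_{A}$, which completes the proof. I expect the real difficulty to sit in Theorem~\ref{thm:weighted-poincare} itself — a Poincar\'e-type inequality required to hold uniformly over the whole class $\{\nu:\nu\lc\mu\}$ — whereas the argument above only transposes the Gaussian scheme of \cite{Cordero-Erausquin2004}, with the Gaussian Brascamp--Lieb/Poincar\'e inequality there replaced by the new one; within the present argument the only delicate points are the approximation step (stability of both the hypotheses and the conclusion under the limit) and the bookkeeping behind the two displayed identities.
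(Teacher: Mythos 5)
Your reduction is sound up to the last step, and it coincides with the paper's: the second-derivative computation, the identity expressing $-\int\langle\nabla h_{A},Ax\rangle\,\dd\bar\mu$ as the energy $\mathcal E_{\mu}(u_{A})$ (this is exactly relation (\ref{eq:relation}) combined with the decomposition (\ref{eq:matrix-explicit})), and the observation that the hypotheses on $w$ make both coefficients nonnegative all match Section \ref{sec:the-B-property}. What remains to be proved after your computation is inequality (\ref{eq:local-B-ineq}), i.e. $\var_{\bar\mu}(f_{0})\le\int\langle\nabla f_{0},Ax\rangle\,\dd\bar\mu$ for $f_{0}(x)=\langle\nabla W(x),Ax\rangle$, which in your notation is $\var_{\bar\mu}(h_{A})\le\mathcal E_{\mu}(u_{A})$.

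The gap is in your final sentence: this inequality is \emph{not} an instance of Theorem \ref{thm:weighted-poincare}. That theorem states $\int\frac{w'(|x|)}{|x|}h^{2}\,\dd\nu\le\int|\nabla h|^{2}\,\dd\nu$ for \emph{odd} $h$; it contains no statement of the form $\var_{\nu}(L_{\mu}u)\le\int\langle(\nabla^{2}W+\frac{w'(|x|)}{|x|}\,\mathrm{Id})\nabla u,\nabla u\rangle\,\dd\nu$, so there is no ``$u$'' in it to specialize to $u_{A}$. Worse, such a variance bound is false for general even $u$: in the Gaussian case $w(t)=t^{2}/2$ it would read $\var_{\bar\mu}(\Delta u-\langle x,\nabla u\rangle)\le2\int|\nabla u|^{2}\,\dd\bar\mu$, which fails for oscillatory even $u$ such as $u(x)=\epsilon\cos\langle k,x\rangle$ with $|k|$ large (the left side grows like $|k|^{4}$, the right side like $|k|^{2}$). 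So the step cannot be a formal consequence of Theorem \ref{thm:weighted-poincare}; it is precisely Theorem \ref{thm:BL-even} applied to the even function $f_{0}$, and the passage from Theorem \ref{thm:weighted-poincare} to Theorem \ref{thm:BL-even} is the substantive argument you have skipped. In the paper this is done by the H\"ormander $L^{2}$ method: one approximates $f_{0}-\int f_{0}\,\dd\bar\mu$ in $L^{2}(\bar\mu)$ by $Lu$ where $L$ is the generator associated with $\bar\mu$ (potential $W+\oo_{K}^{\infty}$, not $W$ alone, so the relevant $u$ is not $u_{A}$), uses the Bochner integration by parts $\int(Lu)^{2}\,\dd\bar\mu=\int\left(\Vert\nabla^{2}u\Vert_{2}^{2}+\langle\nabla^{2}V\,\nabla u,\nabla u\rangle\right)\dd\bar\mu$, completes a square against the matrix $\nabla^{2}W+\frac{w'(|x|)}{|x|}\,\mathrm{Id}$, and only then applies Theorem \ref{thm:weighted-poincare} to the odd partial derivatives $\partial_{i}u$. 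As written, your proof is the paper's proof with the content of Section \ref{sec:BL} silently deleted, and the concluding step does not follow; the difficulty you attribute entirely to Theorem \ref{thm:weighted-poincare} in fact also lives in this bridge.
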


Let us mention that we will actually prove the following more general
statement: under the same assumptions  on $w$, if $v:\RR^{n}\to\RR\cup\left\{ +\infty\right\} $
is an even convex function then the function 
\begin{equation}
t\to\int_{\RR^{n}}e^{-v(e^{tA}x)-w\left(\left|x\right|\right)}\dd x\qquad\textrm{ is log-concave on \ensuremath{\RR}. }\label{eq:functional_B_property}
\end{equation}
The theorem corresponds to the choice $v=1_{K}^{\infty}$,
replacing $t$ by $-t$.
This ``functional''
version of the (B) property was previously studied in \cite{Cordero-Erausquin2020}. 

Note that the functions $w(t)=w_{p}(t):=t^{p}/p$ satisfy the assumptions
of the theorem for all $p>0$. Hence the corresponding measures 
\[
\dd\mu_{p}=e^{-\left|x\right|^{p}/p}\dd x
\]
all have the strong (B) property. Taking $w=\oo_{[0,1]}^{\infty}$
(that is $p\to+\infty)$ we see that the uniform measure on $B_{2}^{n}$
also has the strong (B) property. Recall we are free to pick any Euclidean
structure, or equivalently we can work with measures on $\RR^{n}$
of the form 
\begin{equation}
\dd\mu(x)=e^{-w(\langle Cx,x\rangle)}\:\dd x\label{eq:quadratic}
\end{equation}
where $C$ is a symmetric positive matrix; one just has  to be careful
when stating the strong (B) property as in this case the symmetry condition
on the matrix $A$ is that $CA=A^{\ast}C$. For the classical (B)
property ($A={\rm Id}$) there is no issue here and in particular
the uniform measure on an ellipsoid satisfies the (B) inequality.
Note also that if $\mathcal{E}$ is an ellipsoid we may use (\ref{eq:functional_B_property})
with $A={\rm Id}$, $w=\oo_{[0,1]}^{\infty}$ and the norm associated
to $\mathcal{E}$. Performing the change of variables $y=e^{tA}x$
(whose Jacobian is log-linear and hence immaterial), we derive the
following corollary.

\begin{cor}
Let  $\nu$ be an arbitrary even log-concave measure
on $\RR^{n}$ and $\mathcal{E}\subset \RR^n$ be an ellipsoid. Then we have that 
$$t\to\nu(e^{t}\mathcal{E})\quad\textrm{is log-concave on \ensuremath{\RR}. }\label{eq:dilate_ellipsoid}
$$
\end{cor}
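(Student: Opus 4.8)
The plan is to derive the corollary from the ``functional'' (B) property \eqref{eq:functional_B_property}, exploiting the freedom to choose the Euclidean structure on $\RR^{n}$. Write $\dd\nu(x)=e^{-v(x)}\,\dd x$ with $v:\RR^{n}\to\RR\cup\{+\infty\}$ convex and even. The quantity $\nu(e^{t}\mathcal{E})$ depends only on the measure $\nu$ and on the scalar dilate $e^{t}\mathcal{E}$, not on any choice of inner product, so one is free to equip $\RR^{n}$ with the Euclidean structure for which the (centered) ellipsoid $\mathcal{E}$ is the unit ball; write $|\cdot|$ for the associated norm, so that $\oo_{\mathcal{E}}(y)=\oo_{[0,1]}(|y|)=e^{-w(|y|)}$ with $w=\oo_{[0,1]}^{\infty}$. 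The idea is that, after a harmless rescaling, computing $\nu(e^{t}\mathcal{E})$ amounts to integrating the dilated even log-concave function $e^{-v(e^{t}\cdot)}$ against the rotationally invariant weight $e^{-w(|\cdot|)}$, which is exactly the situation covered by \eqref{eq:functional_B_property}.

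Concretely, first I would substitute $x=e^{t}y$ in the integral defining $\nu(e^{t}\mathcal{E})$. Since $e^{t}y\in e^{t}\mathcal{E}$ precisely when $y\in\mathcal{E}$, this gives
\[
\nu(e^{t}\mathcal{E})=\int_{\RR^{n}}\oo_{e^{t}\mathcal{E}}(x)\,e^{-v(x)}\,\dd x=e^{nt}\int_{\RR^{n}}e^{-v(e^{t}y)-w(|y|)}\,\dd y .
\]
Next I would check that $w=\oo_{[0,1]}^{\infty}$ is an admissible weight in the sense of Theorem \ref{thm:B-result}: it is increasing on $(0,\infty)$ and $t\mapsto w(e^{t})=\oo_{(-\infty,0]}^{\infty}$ is convex (this is the limiting case $p\to\infty$ already noted after Theorem \ref{thm:B-result}). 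Applying the general statement \eqref{eq:functional_B_property} with this $w$, with $A=\mathrm{Id}$, and with the even convex function $v$, shows that $t\mapsto\int_{\RR^{n}}e^{-v(e^{t}y)-w(|y|)}\,\dd y$ is log-concave on $\RR$. Since $t\mapsto e^{nt}$ is log-linear and a product of log-concave functions of $t$ is again log-concave, it follows that $t\mapsto\nu(e^{t}\mathcal{E})$ is log-concave, which is the assertion.

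I do not expect any genuine obstacle: once \eqref{eq:functional_B_property} is granted, the corollary is essentially a change of variables. The only points deserving a sentence are that (i) replacing the ambient inner product by the one adapted to $\mathcal{E}$ alters neither $\nu$ nor the scalar dilate $e^{t}\mathcal{E}$, so the reduction is legitimate; (ii) the indicator weight $\oo_{[0,1]}^{\infty}$ is permitted in Theorem \ref{thm:B-result}; and (iii) the Jacobian factor $e^{nt}$, being log-linear, does not affect log-concavity of the product. If one wishes $\nu$ to be a probability (or merely finite) measure, then $\nu(e^{t}\mathcal{E})<\infty$ for all $t$, so every integral above is finite and the statement is non-vacuous. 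The real content, Theorem \ref{thm:B-result} / \eqref{eq:functional_B_property}, is established elsewhere; the present corollary merely records that it already handles scalar dilates of an \emph{arbitrary} ellipsoid by an \emph{arbitrary} even log-concave measure, not only dilates of $B_{2}^{n}$ by rotationally invariant measures.
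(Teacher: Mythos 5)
Your proof is correct and is essentially the paper's own argument: the authors likewise obtain the corollary by applying the functional (B) property \eqref{eq:functional_B_property} with $A=\mathrm{Id}$, $w=\oo_{[0,1]}^{\infty}$ and the Euclidean norm associated to $\mathcal{E}$, and then performing the change of variables $y=e^{t}x$, whose log-linear Jacobian $e^{nt}$ is immaterial. The points you flag (admissibility of the indicator weight, invariance under the choice of inner product, harmlessness of the Jacobian) are exactly the ones the paper relies on.
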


It is also worth mentioning that by approximation, our results apply
to degenerate nonnegative quadratic forms as well, that is to the
case where the matrix $C$ in (\ref{eq:quadratic}) is degenerate.
For instance we can consider measures of the form $e^{-w(\sqrt{x_{1}^{2}+\ldots x_{k}^{2}})}\,\dd x$
with $k\le n$. 

Let us give some further examples of non-log-concave measures that
satisfy our assumptions and for which our results hold. For instance,
by taking $w(t)=a\log t+\tilde{w}(t)$, for any $a\ge0$ and $\tilde{w}$
satisfying our assumptions (possibly $\tilde{w}\equiv0)$, we can
consider measures of the form
\[
\dd\mu(x)=|x|^{-a}e^{-\tilde{w}(|x|)}\dd x.
\]

It is reasonable to impose local integrability (around zero) of the
density, that is $0\le a<n$, for if not the measure of every symmetric
convex body  is $+\infty,$ and the result is therefore less interesting.
We can also take for instance $w(t)=a\log(1+t^{b})$ for any $a,b\ge0$
and work with measures of the form
\[
\dd\mu(x)=(1+|x|^{b})^{-a}\:\dd x,
\]
which include Cauchy-type measures on $\RR^{n}$.

Note that our condition on the density $e^{-w(|x|)}$ of the rotationally measure $\mu$ is stable under products, because the condition on the corresponding function $w$ is stable under additions. In particular we can
replace $\dd x$ in the previous examples by a suitable rotationally invariant measure. For example, we can restrict these measures to a centered Euclidean ball. 

Let us now comment on the proof of the (B) inequality. It is well known that taking second derivatives reduces Brunn-Minkowski
type inequalities to spectral inequalities for some differential operator.
The proof of Theorem \ref{thm:B-result} follows a scheme similar to
\cite{Cordero-Erausquin2004}, which handled the Gaussian case $\mu_{2}$
by establishing a connection with a ``second eigenvalue problem''
associated to measures that are log-concave with respect to $\mu_{2}$.
We will reduce the problem to a spectral inequality of Brascamp–Lieb
type, in an improved form for even functions. By examining the Gaussian
case one could seek an improvement in the constant of a ``classical''
spectral inequality. However, we believe this would not be the right
way to go (see the remark at the end of Section \ref{sec:the-B-property}).
Moreover, already for our rotationally invariant measure $\mu$, we
do not know the exact whole spectrum (unlike the gaussian case), and
we need anyway to work with measures that are log-concave with respect
to $\mu.$ We will instead establish the following sharp spectral inequality,
from which the result follows:
\begin{thm}
\label{thm:BL-even}Let $w:[0,\infty)\to\RR$ be a $C^{2}$-smooth
increasing function such that $t\mapsto w(e^{t})$ is convex. Define
$W:\RR^{n}\to\RR$ by $W(x)=w\left(\left|x\right|\right)$ and let $\nu$ be an even probability measure with $\nu\lc e^{-W(x)}\, \dd x$. 

Then for every \emph{even} $C^{1}$ function $f:\RR^{n}\to\RR$ such
that $\int\left|f\right|^{2}\dd\nu<\infty$ we have
\[
\var_{\nu}f\le\int\left\langle \left(\nabla^{2}W+\frac{w'\left(\left|x\right|\right)}{\left|x\right|}\,{\rm Id}\right)^{-1}\nabla f,\nabla f\right\rangle \dd\nu.
\]
\end{thm}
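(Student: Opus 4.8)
The plan is to reduce the statement to a weighted Poincaré inequality on the whole space (no restriction to even functions) for a cleverly chosen auxiliary measure, using the classical trick that symmetrizes the problem. Write $\dd\nu = e^{-v}\,e^{-W}\,\dd x$ with $v$ even and convex (this is what $\nu\lc e^{-W}\dd x$ means), so the potential of $\nu$ is $U := v + W$, which is even. The operator on the right-hand side is $L$-harmonic-looking: we want to compare $\var_\nu f$ with $\int \langle M^{-1}\nabla f,\nabla f\rangle\,\dd\nu$, where $M := \nabla^2 W + \frac{w'(|x|)}{|x|}\,\mathrm{Id}$. The key algebraic observation is that $M$ is \emph{not} $\nabla^2 U$; rather, a direct computation of the Hessian of $W(x)=w(|x|)$ gives
\[
\nabla^2 W(x) = w''(|x|)\,\frac{x\otimes x}{|x|^2} + \frac{w'(|x|)}{|x|}\Bigl(\mathrm{Id} - \frac{x\otimes x}{|x|^2}\Bigr),
\]
so that $M = \nabla^2 W + \frac{w'}{|x|}\mathrm{Id}$ replaces the radial eigenvalue $w''$ by $w'' + \frac{w'}{|x|}$ and the tangential eigenvalue $\frac{w'}{|x|}$ by $\frac{2w'}{|x|}$. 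The hypothesis that $t\mapsto w(e^t)$ is convex is exactly $s w''(s) + w'(s)\ge 0$, i.e. the radial eigenvalue of $M$ is nonnegative, and since $w$ is increasing the tangential one is nonnegative too, so $M\succeq 0$ and $M^{-1}$ makes sense (on the relevant range; one handles degeneracies by the usual approximation, e.g. adding $\varepsilon|x|^2$ to $w$).

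The heart of the argument is a Brascamp–Lieb / Hörmander-type inequality. The classical Brascamp–Lieb inequality for the even measure $\nu$ would give $\var_\nu f \le \int\langle (\nabla^2 U)^{-1}\nabla f,\nabla f\rangle\,\dd\nu$, but $\nabla^2 U = \nabla^2 v + \nabla^2 W$ may be too small (indeed $W$ is not convex), so this is both too weak and possibly false as stated. Instead I would run the $L^2$-method (Hörmander's duality, or equivalently the Kolesnikov–Milman/Kolesnikov–Livshyts scheme referenced in the introduction) with the ``correct'' Bochner-type commutation identity, and \emph{use evenness} to gain the extra curvature term $\frac{w'}{|x|}\mathrm{Id}$ coming from the first-order part. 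Concretely: for even $f$ with $\int f\,\dd\nu = 0$, solve the elliptic equation $L_\nu g := \Delta g - \langle \nabla U,\nabla g\rangle = -f$ with $g$ even; then $\var_\nu f = \int f g\,\dd\nu = \int \langle \nabla f, \nabla g\rangle\,\dd\nu$, and by Cauchy–Schwarz it suffices to show $\int \langle \nabla^2 U\,\nabla g,\nabla g\rangle\,\dd\nu + (\text{boundary/first-order gain}) \ge \int \langle M\,\nabla g,\nabla g\rangle\,\dd\nu$ after the integration-by-parts that produces $\int \|\nabla^2 g\|^2_{HS}\,\dd\nu + \int\langle \nabla^2 U\,\nabla g,\nabla g\rangle\,\dd\nu = \int (L_\nu g)^2\,\dd\nu$. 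The surplus term $M - \nabla^2 U = \frac{w'(|x|)}{|x|}\mathrm{Id} - \nabla^2 v$ is where evenness enters: integrating the cross term involving $\nabla v$ against an even $\nabla g$, the contribution of $\nabla^2 v$ can be absorbed (it has a favorable sign as $v$ is convex), while the radial first-order drift of $W$, when paired with the constraint that $g$ is even (hence $\nabla g$ vanishes at the origin and has odd components), yields precisely the extra $\frac{w'}{|x|}$ via a one-dimensional Hardy/Poincaré estimate $\int_0^\infty (g'(s))^2\, s^{n-1} e^{-w(s)}\,\dd s \ge \cdots$ along rays — this is the rotationally invariant analogue of the gain that makes the Gaussian $(B)$ proof work.

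The main obstacle, and the step I expect to require the most care, is making rigorous the "gain from evenness" — i.e. proving that for an even test function the cross/first-order terms produce exactly the tensor $\frac{w'(|x|)}{|x|}\mathrm{Id}$ and not something weaker. I would do this by decomposing $\nabla g = g_r\,\theta + \nabla_\theta g$ into radial and spherical parts ($\theta = x/|x|$), expanding the full Bochner identity for $L_\nu$ in polar coordinates, and handling the radial direction by the sharp one-dimensional weighted Poincaré inequality for even functions on $\RR$ with weight $s^{n-1}e^{-w(s)}$ (where the evenness of $g$ forces the radial trace to be even in $s$, eliminating the bad boundary term at $s=0$), while the spherical directions are controlled by the Lichnerowicz/Poincaré inequality on $\SS^{n-1}$ together with the tangential eigenvalue $\frac{w'}{|x|}$ of $\nabla^2 W$. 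Secondary technicalities: justifying the existence and regularity/decay of the solution $g$ to $L_\nu g = -f$ when $\nu$ has heavy (e.g. Cauchy) tails and when $v = 1_K^\infty$ is an extended-valued convex function (handled by approximating $K$ by smooth strongly convex bodies and $v$ by smooth convex potentials, then passing to the limit using that all constants are dimension-free and monotone), and checking that $M$ is invertible $\nu$-a.e. after the regularization $w \rightsquigarrow w + \varepsilon s^2/2$.
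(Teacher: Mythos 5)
Your first half reproduces, in effect, the paper's own reduction: write $\dd\nu=e^{-v-W}\dd x$, use the dual/Hörmander $L^{2}$ scheme with the operator $Lu=\Delta u-\langle\nabla(W+v),\nabla u\rangle$ and an even test function $u$, use $\nabla^{2}(W+v)\succeq\nabla^{2}W$ to discard $\nabla^{2}v$, and complete the square (or Cauchy--Schwarz) against the matrix $M=\nabla^{2}W+\frac{w'(|x|)}{|x|}\,\mathrm{Id}$. (Minor slip: $\var_{\nu}f=\int\langle\nabla f,\nabla g\rangle\dd\nu$, not $\int fg\,\dd\nu$; and the paper sidesteps solving $Lg=-f$ by approximating $f$ in $L^{2}(\nu)$ by $Lu$ with $u$ smooth, compactly supported and even, which avoids your regularity/decay worries.) After these steps everything boils down, exactly as in the paper, to the inequality $\int\frac{w'(|x|)}{|x|}|\nabla u|^{2}\dd\nu\le\int\|\nabla^{2}u\|_{2}^{2}\dd\nu$ for even $u$.

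The genuine gap is in your plan for this last inequality, which is the heart of the matter. You propose a one-dimensional Hardy/Poincaré estimate along rays with weight $s^{n-1}e^{-w(s)}$ for the radial part, plus ``the Lichnerowicz/Poincaré inequality on $\SS^{n-1}$'' for the spherical part. But $\nu$ is not $e^{-W}\dd x$: its density carries the extra factor $e^{-v(x)}$ with $v$ an arbitrary even convex function (in the application, $v=\oo_{K}^{\infty}$ for a symmetric convex body $K$), so the restriction of $\nu$ to each sphere $r\SS^{n-1}$ has the non-rotationally-invariant density $e^{-v(r\theta)}$. The classical Lichnerowicz inequality for the uniform spherical measure says nothing here; what is needed is the weighted inequality $\int_{\SS^{n-1}}(n-1-\DR v)\,g^{2}e^{-v}\dd\theta\le\int_{\SS^{n-1}}|\nabla_{\SS}g|^{2}e^{-v}\dd\theta$ for mean-zero $g$ and convex $v$, which is a nontrivial input (a special case of the Kolesnikov--Milman boundary Poincaré inequality, or equivalently obtained by differentiating Brunn--Minkowski for $e^{-v}\dd x$). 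Moreover, the way parity enters is not ``the radial trace of $g$ is even, killing a boundary term at $s=0$'': in the correct argument one applies a weighted Poincaré inequality to each partial derivative $\partial_{i}u$, which is an \emph{odd} function; its oddness gives both the vanishing at the origin along rays (for the one-dimensional lemma) and, crucially, the mean-zero condition $\int_{\SS^{n-1}}\partial_{i}u(r\theta)e^{-v(r\theta)}\dd\theta=0$ on each sphere (since $v$ is even), which is what allows the weighted spherical inequality to be invoked; summing over $i$ then produces $\|\nabla^{2}u\|_{2}^{2}$. As written, your sketch would only cover the case $v\equiv0$ and does not close the argument for general symmetric convex $K$, which is precisely the new content of the paper's Theorem~\ref{thm:weighted-poincare}.
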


Here and for the rest of the paper, we omit the dependence on the variable except where necessary. Also, as we will see in Section \ref{sec:BL}, the matrix $\nabla^{2}W+\frac{w'\left(\left|x\right|\right)}{\left|x\right|}\,{\rm Id}$
is always positive \emph{semi}-definite, but it can be singular (formulation should then be adapted, as explained in Remark~\ref{rem:dual} below), although in applications we can assume by approximation that
this situation does not arise. 

One can check that equality holds in Theorem \ref{thm:BL-even} when
 $\dd\nu=\frac{e^{-W(x)}}{{\it \int e^{-W}}}\dd x$, with $\int e^{-W}<\infty$, 
and $f(x)=\left\langle \nabla W,x\right\rangle =w'\left(\left|x\right|\right)\left|x\right|$
. This can be seen without explicit computations by inspecting the
use of Theorem \ref{thm:BL-even} in the proof of Theorem \ref{thm:B-result}
in Section \ref{sec:the-B-property} and using the fact that for $K=\RR^{n}$
the function $t\mapsto\mu\left(e^{t}K\right)$ is constant and thus
log-linear. 

In the case of the Gaussian measure $\mu_{2}$ (that is $w(t)=t^{2}/2)$,
this inequality reduces to the fact that for an even measure $\nu\lc\mu_{2},$ one has $\var_{\nu}f\le\frac{1}{2}\int|\nabla f|^{2}\:\dd\nu$
for every even smooth $f$. 

The matrix $\nabla^{2}W+\frac{w'\left(\left|x\right|\right)}{\left|x\right|}\,{\rm Id}$
is a rank one perturbation of a scalar matrix (see (\ref{eq:matrix-explicit}) 
below), so we can compute its inverse explicitly. The result is that
under the assumptions of Theorem~\ref{thm:BL-even} we have the inequality
\[
\var_{\nu}f\le\int\left(\frac{\left|x\right|}{2w'\left(\left|x\right|\right)}\left|\nabla f\right|^{2}-\frac{\left|x\right|w''\left(\left|x\right|\right)-w'\left(\left|x\right|\right)}{2\left|x\right|w'\left(\left|x\right|\right)\left(\left|x\right|w''\left(\left|x\right|\right)+w'\left(\left|x\right|\right)\right)}\left\langle \nabla f,x\right\rangle ^{2}\right)\dd\nu.
\]
For example, taking $w(t)=w_{p}(t)=t^{p}/p$ we see that for an even  $\nu\lc\mu_{p}$ we have
\[
\var_{\nu}f\le\int\left(\frac{1}{2}\left|x\right|^{2-p}\left|\nabla f\right|^{2}-\frac{p-2}{2p}\cdot\frac{\left\langle \nabla f,x\right\rangle ^{2}}{\left|x\right|^{p}}\right)\dd\nu.
\]
 Using the trivial bounds $0\le\left\langle \nabla f,x\right\rangle ^{2}\le\left|\nabla f\right|^{2}\left|x\right|^{2}$
we can deduce the less precise but more elegant inequality 
\[
\var_{\nu}f\le\max\left\{ \frac{1}{p},\frac{1}{2}\right\} \cdot\int\left|x\right|^{2-p}\left|\nabla f\right|^{2}\dd\nu.
\]
This inequality is still sharp when $\nu=c\cdot\mu_{p}$ for $0<p\le2$
and $c$ a normalization constant, with equality when $f(x)=|x|^p$. Similarly,
taking $w_{C}(t)=a\cdot\log\left(1+t^{2}\right)$ we see that when
$\nu$ is log-concave with respect to the Cauchy-type measure $\dd\mu_C=\frac{1}{\left(1+\left|x\right|^{2}\right)^{a}}\dd x$
we obtain the inequalities
\[
\var_{\nu}f\le\frac{1}{4a}\int\left(1+\left|x\right|^{2}\right)\left(\left|\nabla f\right|^{2}+\left\langle \nabla f,x\right\rangle ^{2}\right)\dd\nu\le\frac{1}{4a}\int\left(1+\left|x\right|^{2}\right)^{2}|\nabla f|^{2}\,\dd\nu.
\]
 Again, both of these inequalities are sharp when $\nu$ is the (normalized) reference measure $\mu_C$.
This last inequality is similar in spirit to a result of Bobkov and
Ledoux (\cite{Bobkov2009}) for Cauchy measures, which is only sharp
up to a universal constant but holds for non-even functions; this was recently sharpened in \cite{Bobkov2022} in the case $a=n$.   

In the Gaussian case, the above-mentioned inequality $\var_{\nu}f\le\frac{1}{2}\int|\nabla f|^{2}\:\dd\nu$
for $f$ even was at the heart of the argument in \cite{Cordero-Erausquin2004}.
It was established using an $L^{2}$ argument with a Bochner integration
by parts (a second argument using Cafferelli's contraction property
was also given). The argument used the following classical Poincaré inequality,
which follows from the variance Brascamp–Lieb inequality (\cite{Brascamp1976})
or the Bakry–Emery criterion (\cite{Bakry1985}): For a probability measure $\nu$
with $\nu\lc\mu_2$, one has for every
smooth $h$ that $\var_{\nu}h\le\int|\nabla h|^{2}\:\dd\nu.$ For
our general $\mu$ this inequality needs to be replaced by a \emph{weighted}
Poincaré inequality that appears to be new, even in the simple case
of of the measure $e^{-w(|x|)}\dd x$ for which it is sharp. In fact we will only prove
such an inequality when the function is \emph{odd}, which is good
enough for our purposes. 
\begin{thm}
\label{thm:weighted-poincare}Let $w:(0,\infty)\to\RR$ be $C^{1}$-smooth
and increasing and let $\nu$ be an even finite measure on $\RR^n$ with  $\nu\lc e^{-w\left(\left|x\right|\right)}\dd x$.

Then for every $C^{1}$-smooth and \textbf{odd} function $h:\RR^{n}\to\RR$
we have 
\[
\int\frac{w'\left(\left|x\right|\right)}{\left|x\right|}h^{2}\dd\nu\le\int\left|\nabla h\right|^{2}\dd\nu.
\]
\end{thm}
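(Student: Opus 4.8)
The plan is to peel off the radial behaviour with an integration by parts and reduce the statement to the classical Poincaré inequality on the sphere, and then to pass from the reference measure $\mu_{0}:=e^{-w(|x|)}\dd x$ to a general $\nu$ by a layer‑cake argument. By standard approximation one may assume that $\nu$ has a smooth positive density, that $h$ is smooth and vanishes near the origin and at infinity, and (truncating $w$) that $\mu_{0}$ is finite. Write $\dd\nu=e^{-v}\dd\mu_{0}=e^{-V}\dd x$ with $V=v+W$, $W(x)=w(|x|)$, and $v$ convex and even. Splitting $\nabla h=\big(\nabla h-\tfrac{h}{|x|^{2}}x\big)+\tfrac{h}{|x|^{2}}x$, expanding $|\nabla h|^{2}$, and integrating the cross term by parts against the field $\tfrac{x}{|x|^{2}}$ — the boundary term at the origin vanishes because $h$ odd gives $h^{2}=O(|x|^{2})$, so $\big|\tfrac{x}{|x|^{2}}h^{2}e^{-V}\big|=O(|x|)$ there — while using $\operatorname{div}\tfrac{x}{|x|^{2}}=\tfrac{n-2}{|x|^{2}}$ and $\nabla W=w'(|x|)\tfrac{x}{|x|}$, one gets the exact identity
\[
\int|\nabla h|^{2}\dd\nu-\int\frac{w'(|x|)}{|x|}h^{2}\dd\nu=\int\Big(\partial_{r}h-\frac{h}{|x|}\Big)^{2}\dd\nu+\int\frac{\langle\nabla v,x\rangle}{|x|^{2}}h^{2}\dd\nu+\int\frac{1}{|x|^{2}}\big(|\nabla_{\!\theta}h|^{2}-(n-1)h^{2}\big)\dd\nu,
\]
where $\partial_{r}h$ and $\nabla_{\!\theta}h$ are the radial and spherical components of $\nabla h$.

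The first two terms on the right are nonnegative — the first trivially, the second because $v$ convex and even forces $\langle\nabla v(x),x\rangle\ge0$ (monotonicity of $\nabla v$ together with $0\in\partial v(0)$, since $v(0)=\min v$). Hence everything reduces to the last term. For $\nu=\mu_{0}$ (i.e. $v\equiv0$) this is immediate: for each fixed $r$ the odd function $\theta\mapsto h(r\theta)$ has mean zero on $S^{n-1}$, so the spherical Poincaré inequality with its sharp constant $n-1$ (the first nonzero eigenvalue of $-\Delta_{S^{n-1}}$) gives $\int_{S^{n-1}}|\nabla_{\!\theta}h|^{2}\ge(n-1)\int_{S^{n-1}}h^{2}$ on every sphere; integrating in $r$ finishes the reference case. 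This also makes the sharpness transparent: equality forces $h$ restricted to each sphere to be a degree‑one spherical harmonic and $\partial_{r}h=h/|x|$, i.e. $h$ a linear function, which is precisely the extremal case of Theorem~\ref{thm:weighted-poincare} for $\nu=\mu_{0}$.

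For a general $\nu$ the spherical step breaks down: the induced measure $e^{-v(r\theta)}\dd\theta$ on a sphere can be arbitrarily concentrated and need not have spectral gap $n-1$ on odd functions, so the missing positivity must come from coupling with the radial variable. To handle this I would use a layer‑cake reduction: since $e^{-v}=\int_{0}^{1}\mathbf 1_{\{v<\log(1/s)\}}\dd s$, the measure $\nu$ is an average over $s$ of the restrictions $\mu_{0}|_{K_{s}}$ of $\mu_{0}$ to the symmetric convex bodies $K_{s}=\{v<\log(1/s)\}$, and the asserted inequality is linear in the measure; so it suffices to prove it for $\nu=\mu_{0}|_{K}$ with $K$ symmetric and convex. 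For such $\nu$ one disintegrates over rays exactly as above, the only change being that each radial integral now runs over the truncated segment $[0,\rho_{K}(\theta)]$; the one–variable completion of the square on that segment produces, besides $\int_{0}^{\rho_{K}(\theta)}(\partial_{r}h-h/r)^{2}e^{-w}r^{n-1}\dd r$, an extra \emph{nonnegative} boundary term $h(\rho_{K}(\theta)\theta)^{2}e^{-w(\rho_{K}(\theta))}\rho_{K}(\theta)^{\,n-2}$.

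The crux — and the step I expect to be the main obstacle — is then to show that these radial gains, summed over $\theta\in S^{n-1}$, dominate the possibly negative spherical deficit
\[
\int_{0}^{\infty}e^{-w(r)}r^{n-3}\Big[\int_{\{\theta\,:\,r\theta\in K\}}\big(|\nabla_{\!\theta}h|^{2}-(n-1)h^{2}\big)\dd\theta\Big]\dd r,
\]
using that the sets $\{\theta:r\theta\in K\}$ form a nested family of symmetric geodesically convex caps of $S^{n-1}$ and that $h$ is odd; this is exactly where the full convexity of $K$ is used (the statement is already false, via a "two–caps–joined–by–a–thin–tube" $K$, if $K$ is only assumed star‑shaped about the origin). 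I would expect this to need either a refined one–dimensional inequality along the chords of $K$ in directions transverse to the rays, or an induction on the number of defining half‑spaces of a symmetric polytope $K$ combined with a stability argument under intersecting with one more symmetric slab; in either case the interplay between the truncation at $\partial K$ and the spherical Poincaré deficit is the heart of the matter.
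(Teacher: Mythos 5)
Your radial computation is sound and in fact coincides with the paper's: the completion of the square along each ray (Lemma \ref{lem:1d-poincare}) produces exactly your identity, with the nonnegative remainder $\int(\partial_{r}h-h/|x|)^{2}\dd\nu$, and for the reference measure $\nu=e^{-w(|x|)}\dd x$ the classical spherical Poincar\'e inequality with constant $n-1$ for the odd (hence centered) functions $\theta\mapsto h(r\theta)$ finishes the proof, as you say. The gap is in how you treat the term $\int\frac{\langle\nabla v,x\rangle}{|x|^{2}}h^{2}\dd\nu$. You discard it as a nonnegative bonus, and are then forced to ask the weighted spherical measures $e^{-v(r\theta)}\dd\theta$ to have spectral gap $n-1$ on odd functions — which, as you correctly note, is false in general. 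But that term is precisely what compensates the failure: keeping it, the spherical inequality you need on each sphere is
\[
\int_{\SS^{n-1}}\bigl(n-1-\DR v\bigr)\,g^{2}\,e^{-v}\dd\theta\;\le\;\int_{\SS^{n-1}}\left|\nabla_{\SS}g\right|^{2}e^{-v}\dd\theta
\]
for centered $g$ and convex $v$, where $\DR v(x)=\langle\nabla v(x),x\rangle$. This is Proposition \ref{prop:spherical-poincare}, a special case of the Kolesnikov--Milman boundary inequality (Theorem \ref{thm:kolesnikov-milman}) applied to the Euclidean ball with weight $e^{-v}$, for which $n-1-\DR v$ is exactly the weighted mean curvature of the sphere; it can also be obtained by differentiating the Brunn--Minkowski inequality for the log-concave measure $e^{-v}\dd x$. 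With this weighted inequality, no coupling between the radial and spherical variables is needed and the proof closes immediately.

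Your fallback — the layer-cake reduction to $\nu=\mu_{0}|_{K}$ with $K$ symmetric convex, truncated radial integrals, and the hope that the radial boundary gains dominate the spherical deficit — is a legitimate reduction (the inequality is linear in $\nu$), but the step you yourself flag as the main obstacle is left entirely open, and it amounts to re-deriving the Kolesnikov--Milman/Colesanti-type boundary inequality in the special case of indicators rather than citing it. As written, the proposal proves the theorem only for $\nu=e^{-w(|x|)}\dd x$ and does not establish it for general even $\nu\lc e^{-w(|x|)}\dd x$, which is the case actually needed in the applications (e.g.\ $\mu_{K}$ in the proofs of Theorems \ref{thm:BL-even} and \ref{thm:GZ}).
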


As we will see after the proof, equality holds in Theorem \ref{thm:weighted-poincare}
 when $\dd\nu=e^{-w(\left|x\right|)}\dd x$, and $h(x)=\left\langle x,\theta\right\rangle $
for a fixed $\theta\in\RR^{n}$. 

In the two cases of interest from before $w=w_{p}$ and $w=w_{C}$,
Theorem \ref{thm:weighted-poincare} reduces to the inequalities 
\[
\int\left|x\right|^{p-2}h^{2}\dd\nu\le\int\left|\nabla h\right|^{2}\dd\nu\quad\text{and}\quad\int\frac{h^{2}}{1+\left|x\right|^{2}}\dd\nu\le\frac{1}{2a}\int\left|\nabla h\right|^{2}\dd\nu
\]
respectively. Both of these inequalities are sharp when $\text{\ensuremath{\dd}\ensuremath{\ensuremath{\nu}=\ensuremath{e^{-|x|^{p}/p}\dd}x}}$
and $\dd\nu=\frac{1}{(1+|x|^{2})^{a}}\dd x$ with $a>n/2$ (condition for finiteness), respectively,
with equality for linear functions. 

It turns out that our weighted Poincaré inequality above (Theorem~\ref{thm:weighted-poincare})
allows us to solve the problem of the Gardner-Zvavitch conjecture
for rotationally invariant measures, with the same condition as
for the (B) inequality.

\begin{thm}
\label{thm:GZ}Let $w:[0,\infty)\to(-\infty,\infty]$ be an increasing
function such that $t\mapsto w(e^{t})$ is convex and let $\mu$ be
the measure on $\RR^{n}$ with density $e^{-w\left(\left|x\right|\right)}$.
Then for every symmetric convex bodies $K,L\subset\RR^{n}$ and $\lambda\in[0,1],$
\[ 
\mu((1-\lambda)K+\lambda L)^{1/n}\ge(1-\lambda)\mu(K)^{1/n}+\lambda\mu(L)^{1/n}
\]
 
\end{thm}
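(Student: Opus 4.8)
The plan is to follow the by-now classical route that converts a Brunn--Minkowski-type inequality into an infinitesimal spectral inequality, in the spirit of Kolesnikov--Livshyts \cite{Kolesnikov2021}, but pushing through the \emph{sharp} exponent $1/n$ (as Eskenazis--Moschidis did for the Gaussian measure \cite{Eskenazis2021}) by feeding in Theorem~\ref{thm:weighted-poincare} where the Gaussian argument used the classical Poincaré inequality for $\mu_2$. By a routine approximation one may assume that $K$ and $L$ have smooth, strictly convex boundaries, that $0<\mu(K),\mu(L)<\infty$, that $w$ is smooth, and that $\nabla^{2}W+\frac{w'(|x|)}{|x|}{\rm Id}$ is nonsingular on the region of interest. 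Since concavity is a local property, invariant under affine reparametrisation, and since at each instant $(1-\lambda)K+\lambda L$ may be relabelled as the new body, it suffices to establish the second-order condition at $\lambda=0$:
\[
n\,\Phi(0)\,\Phi''(0)\le(n-1)\,\Phi'(0)^{2},\qquad\Phi(\lambda):=\mu\big((1-\lambda)K+\lambda L\big).
\]

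To compute these derivatives I would use the normal velocity $\psi:=(h_{L}-h_{K})\circ\nu_{K}$ of $\partial K$ (where $\nu_{K}$ is the Gauss map and $h_{K},h_{L}$ are support functions): one has $\Phi'(0)=\int_{\partial K}\psi\,\dd\mu_{\partial K}$ with $\dd\mu_{\partial K}=e^{-W}\,\dd\mathcal{H}^{n-1}$, while $\Phi''(0)$ is given by the weighted Colesanti--Reilly second-variation formula (as used in \cite{Colesanti2017,Kolesnikov2018}) as a boundary integral involving the second fundamental form ${\rm II}$, the mean curvature, $\langle\nabla W,\nu\rangle$ and the tangential gradient of $\psi$. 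Because $K=-K$, $L=-L$ and $\mu$ is even, $\psi$ is an \emph{even} function on $\partial K$. I would then solve the auxiliary Neumann problem
\[
\Delta u-\langle\nabla W,\nabla u\rangle=\Phi'(0)/\Phi(0)\ \text{ in }K,\qquad\partial_{\nu}u=\psi\ \text{ on }\partial K,
\]
whose right-hand side is precisely the compatibility constant; by the symmetry of $K$, of $W$, and of $\psi$, the solution $u$ may be taken \emph{even}, so that $\nabla u$ is \emph{odd}. Substituting $u$ into the generalized Reilly formula for $L_{\mu}:=\Delta-\langle\nabla W,\nabla\cdot\rangle$, integrating by parts on the closed hypersurface $\partial K$, and completing the square in the resulting boundary integrand (legitimate since $K$ is strictly convex, ${\rm II}\succ0$) makes every boundary term nonpositive; combined with the identity $\Phi'(0)^{2}=\Phi(0)\int_{K}(L_{\mu}u)^{2}\,\dd\mu$ (from the divergence theorem and $L_{\mu}u$ being constant), the whole statement reduces to the interior spectral inequality
\[
\int_{K}\Big(\nn{\nabla^{2}u}_{\mathrm{HS}}^{2}+\big\langle(\nabla^{2}W)\nabla u,\nabla u\big\rangle\Big)\dd\mu\ \ge\ \frac{1}{n}\int_{K}\big(\Delta u-\langle\nabla W,\nabla u\rangle\big)^{2}\dd\mu
\]
for every even $u\in C^{2}(\overline{K})$.

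This interior inequality is where Theorem~\ref{thm:weighted-poincare} enters. The Cauchy--Schwarz bound $\nn{\nabla^{2}u}_{\mathrm{HS}}^{2}\ge\frac{1}{n}(\Delta u)^{2}$ is what produces the dimensional factor $1/n$ --- for Lebesgue measure ($W\equiv0$) this single inequality \emph{is} the classical Brunn--Minkowski argument. For a general weight the difficulty is that $\nabla^{2}W$ need not be positive semi-definite (this already fails for Cauchy-type weights), so an extra reservoir of positivity is needed; it is provided by Theorem~\ref{thm:weighted-poincare}, applied to the \emph{odd} components $\partial_{i}u$ of $\nabla u$ and summed over $i$, which gives $\int_{K}\frac{w'(|x|)}{|x|}|\nabla u|^{2}\,\dd\mu\le\int_{K}\nn{\nabla^{2}u}_{\mathrm{HS}}^{2}\,\dd\mu$. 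Since $\big\langle(\nabla^{2}W)v,v\big\rangle+\frac{w'(|x|)}{|x|}|v|^{2}=\langle Mv,v\rangle\ge0$ pointwise, with $M$ exactly the matrix of Theorem~\ref{thm:BL-even}, this upgrades the left-hand side above to include the extra nonnegative term $\int_{K}\langle M\nabla u,\nabla u\rangle\,\dd\mu$. What remains is to expand the right-hand side, dispose of the cross term $-\frac{2}{n}\int_{K}\Delta u\,\langle\nabla W,\nabla u\rangle\,\dd\mu$ and the square $\frac{1}{n}\int_{K}\langle\nabla W,\nabla u\rangle^{2}\,\dd\mu$ by integration by parts against $e^{-W}\,\dd x$ (the boundary contributions so produced being either nonnegative, by star-shapedness of $K$, or recombining with the boundary terms of the Reilly formula), and then to balance the surviving interior terms using the two lower bounds for $\int_{K}\nn{\nabla^{2}u}_{\mathrm{HS}}^{2}\,\dd\mu$, apportioned in the right proportion, together with the convexity constraint $sw''(s)+w'(s)\ge0$.

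The step I expect to be the real obstacle is exactly this last piece of bookkeeping. There is a genuine tension: the Cauchy--Schwarz bound must be used essentially at full strength to reach $1/n$ rather than the $\tfrac{1}{2n}$ of \cite{Kolesnikov2021}, yet one simultaneously needs the weighted-Poincaré positivity, and the indefinite cross-terms only survive in the integrated (not pointwise) sense, so the argument cannot be purely local. The role of the symmetry hypotheses is structural rather than cosmetic: it is precisely the evenness of $u$ --- forced by $K=-K$, $L=-L$ and the rotational invariance of $\mu$ --- that makes $\nabla u$ odd and hence Theorem~\ref{thm:weighted-poincare} available. Finally, the degenerate situations (non-strictly-convex bodies, singular $M$, or $\mu(K)=\infty$) are recovered from the approximation set up at the outset.
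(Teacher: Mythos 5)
Your reduction to an interior spectral inequality for even $u$ with $Lu$ constant is sound: it is exactly the sufficient condition of Kolesnikov--Livshyts that the paper invokes as Theorem~\ref{thm:kolesnikov-livshyts} (you re-derive it via the Reilly formula instead of citing it, which is legitimate). The genuine gap is the step you yourself flag as ``the real obstacle'': the interior inequality is never proved. Your plan --- use the Cauchy--Schwarz bound $\nn{\nabla^{2}u}_{2}^{2}\ge\frac{1}{n}(\Delta u)^{2}$ ``at full strength'' to produce $1/n$, while \emph{simultaneously} applying Theorem~\ref{thm:weighted-poincare} to the odd functions $\partial_{i}u$ to get $\int\frac{w'(|x|)}{|x|}|\nabla u|^{2}\dd\mu_{K}\le\int\nn{\nabla^{2}u}_{2}^{2}\dd\mu_{K}$, and then ``apportion'' the Hessian term --- does not go through as stated: any splitting $\theta+(1-\theta)$ of $\int\nn{\nabla^{2}u}_{2}^{2}\dd\mu_{K}$ yields at best $\theta/n$ together with a sign-indefinite cross term $\frac{2\theta}{n}\int\langle\nabla W,\nabla u\rangle\dd\mu_{K}$, which is precisely the obstruction that left \cite{Kolesnikov2021} at the exponent $\frac{1}{2n}$; integrating that cross term by parts reintroduces $\nabla^{2}W$ and boundary contributions whose signs are exactly what is in doubt for Cauchy-type weights. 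So the proposal assembles the right ingredients but stops short of the one idea that makes the sharp constant come out.

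The paper closes this gap with the device of Eskenazis--Moschidis \cite{Eskenazis2021}, which is absent from your outline: set $r(x)=\frac{|x|^{2}}{2n}$ and use the identity
\[
\nn{\nabla^{2}u}_{2}^{2}=\nn{\nabla^{2}(u-r)}_{2}^{2}+\frac{2}{n}\Delta u-\frac{1}{n}
=\nn{\nabla^{2}(u-r)}_{2}^{2}+\frac{2}{n}\left\langle \nabla W,\nabla u\right\rangle +\frac{1}{n},
\]
the second equality using $Lu=1$; no Cauchy--Schwarz is used at all, and the full $\frac{1}{n}$ is extracted by this identity alone. Theorem~\ref{thm:weighted-poincare} is then applied to the odd partials of $u-r$ (not of $u$), giving
\[
\int\nn{\nabla^{2}(u-r)}_{2}^{2}\dd\mu_{K}\ge\int\frac{w'\left(\left|x\right|\right)}{\left|x\right|}\left(\left|\nabla u\right|^{2}-\frac{2}{n}\left\langle \nabla u,x\right\rangle \right)\dd\mu_{K},
\]
and since $\nabla W=\frac{w'\left(\left|x\right|\right)}{\left|x\right|}x$, the term $\frac{2}{n}\langle\nabla W,\nabla u\rangle$ cancels \emph{identically} against its counterpart in the displayed identity; this exact cancellation, which uses that the Poincaré weight $w'/|x|$ is the same radial factor appearing in $\nabla W$, is what replaces your ``bookkeeping''. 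What survives is $\int\langle(\nabla^{2}W+\frac{w'(|x|)}{|x|}{\rm Id})\nabla u,\nabla u\rangle\dd\mu_{K}+\frac{1}{n}\ge\frac{1}{n}$, the matrix being positive semi-definite because $t\mapsto w(e^{t})$ is convex. Without the $r$-subtraction your scheme, as written, does not reach the sharp exponent.
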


As before, our result includes all rotationally invariant log-concave
measures but applies also beyond this class – see the examples given
above, such as Cauchy type measures. 

There is some surprising phenomenon here that we would like to outline. It was observed by Borell (\cite{Borell1974}, \cite{Borell1975}) that if a measure $\mu$ with density $f$ on $\RR^n$ satisfies any kind of Brunn-Minkowski inequality, even in the weakest form $\mu((1-\lambda)K+\lambda L)\ge \min\{\mu(K),\mu(L)\}$ for every convex sets $K,L$ and $\lambda\in(0,1)$, then $f$ must satisfy some concavity property. It follows from Borell's observation that  the family of measures $\dd\mu_C=\frac{1}{\left(1+\left|x\right|^{2}\right)^{a}}\dd x$
with $a>0$ satisfy \emph{no} Brunn-Minkowski inequality when $2a<n$.  However,  when restricted to \emph{symmetric} convex sets, all these measures satisfy the strong Brunn-Minkowski inequality given by the previous theorem.

The rest of the paper is devoted to the proofs of the main results
and some further comments and extensions. In the next Section we prove
the weighted Poincaré inequality (Theorem \ref{thm:weighted-poincare}).
Then, in Section 3 we will use it to establish our spectral estimate
of Brascamp-Lieb type for even functions (Theorem \ref{thm:BL-even}).
We show in Section 4 that this spectral estimate in turn implies the
strong (B) inequality (Theorem \ref{thm:B-result}). In Section 5,
we give the proof of the dimensional Brunn-Minkowski inequality (Theorem
\ref{thm:GZ}). In the final Section 6, following an argument of \cite{Eskenazis2018}
we explain how to extend the (B) inequality to mixtures of rotationally
invariant measures, thus providing new examples of measures satisfying
this property.

\section{Weighted Poincaré inequalities}

In this section we give the proof of Theorem \ref{thm:weighted-poincare}.
We will proceed by integration in polar coordinates: for an integrable
or nonnegative function $F$ on $\RR^{n}$,
\[
\int F(x)\dd x=c_{n}\int_{\SS^{n-1}}\int_{0}^{\infty}F(r\theta)\,r^{n-1}\dd r\dd\theta,
\]
where $\dd\theta$ refers to the usual normalized measure on the sphere
$\SS^{n-1}=\left\{ x:\ \left|x\right|=1\right\} $. Therefore we will
need two Poincaré type inequalities, one for the spherical part and
one for the radial part. 

In order to treat the spherical part, we will need the following weighted
Poincaré inequality on the sphere that is a particular case of a general
result of Kolesnikov and Milman (\cite{Kolesnikov2018}), as we shall
see. 
\begin{prop}
\label{prop:spherical-poincare}Let $v:\RR^{n}\to\RR$ be a convex
$C^{1}$ function and let $\mu$ be the measure on $\SS^{n-1}$ with
density $e^{-v}$. Then for every $C^{1}$ function $g:\SS^{n-1}\to\RR$
with $\int_{\SS^{n-1}}g\,\dd\mu=0$ one has 
\[
\int_{\SS^{n-1}}\left(n-1-\DR v\right)g^{2}\dd\mu\le\int_{\SS^{n-1}}\left|\nabla_{\SS}g\right|^{2}\dd\mu.
\]
 
\end{prop}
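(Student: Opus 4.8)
The plan is to recognize this as a genuine special case of the Kolesnikov--Milman weighted Poincaré inequality on a geodesically convex domain of a Riemannian manifold, applied here to the whole sphere $\SS^{n-1}$. Recall that the Kolesnikov--Milman framework produces, for a probability measure $\dd\mu = e^{-v}\dd\mathrm{vol}$ on a weighted manifold, a Poincaré-type inequality in which the relevant ``generalized Ricci'' or Bakry--Émery-type curvature tensor controls the spectral gap; on the round sphere $\SS^{n-1}$ the intrinsic Ricci curvature is $(n-2)g$, and combining it with the Hessian contribution of the weight $v$ and with the second fundamental form terms that appear in the Kolesnikov--Milman bound yields precisely the operator $n-1-\DR v$, where $\DR v$ denotes the (appropriately normalized) radial part of the Hessian of the ambient convex function $v$ restricted to the sphere. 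So the first step is simply to quote their theorem with the correct choice of manifold, domain ($\SS^{n-1}$ itself, which is geodesically convex in the sense they require once one takes the closed hemisphere decomposition or argues by the standard trick), and weight.

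Concretely, I would set things up as follows. First I would make precise the quantity $\DR v$: for $x\in\SS^{n-1}$, write $x$ as a point of $\RR^n$ and let $\DR v(x) = \langle \nabla^2 v(x)\, x, x\rangle$, i.e. the second derivative of $v$ in the \emph{radial} direction — this is the piece of the ambient Hessian that is ``lost'' when one passes to the sphere, and it is nonnegative since $v$ is convex. Then I would invoke the Kolesnikov--Milman inequality, whose hypotheses here reduce to: $\SS^{n-1}$ is a compact manifold without boundary (so no boundary convexity condition is needed), $v$ is $C^1$ (their statement allows this level of regularity, or one argues by approximation with smooth $v$ and then passes to the limit using monotone/dominated convergence on both sides), and the generalized curvature lower bound holds pointwise with the stated constant. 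The output of their theorem, specialized, is exactly
\[
\int_{\SS^{n-1}}\bigl(n-1-\DR v\bigr)g^2\,\dd\mu \le \int_{\SS^{n-1}}\bigl|\nabla_{\SS}g\bigr|^2\,\dd\mu
\]
for all $C^1$ functions $g$ with $\int g\,\dd\mu = 0$.

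The one genuine point that needs care — and the step I expect to be the main obstacle — is matching the curvature bookkeeping: verifying that the constant coming out of the Kolesnikov--Milman machinery is genuinely $n-1$ (for the flat ``unweighted'' piece) minus the full radial Hessian $\DR v$, with no extraneous factors and with the correct sign conventions for the second fundamental form of $\SS^{n-1}$ sitting inside $\RR^n$. This is because their inequality is typically stated in the intrinsic language of the weighted manifold, whereas here $v$ is given as the restriction of an ambient convex function, so one must carefully split the ambient Hessian $\nabla^2 v$ into its tangential part (which is the intrinsic Hessian on $\SS^{n-1}$, up to the shape-operator correction $\langle\nabla v, x\rangle\,g$) and its radial part $\DR v\cdot (\text{normal}\otimes\text{normal})$. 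I would carry out this decomposition explicitly in one short computation, confirm that the tangential correction term combines with the sphere's own Ricci curvature $(n-2)g$ to give exactly $(n-1)g - \DR v\, g$ in the relevant quadratic form, and thereby conclude. An alternative, if one prefers to avoid citing the general manifold statement, is to give a direct proof via the Bochner/Reilly identity on $\SS^{n-1}$ with weight $e^{-v}$: integrate the weighted Bochner formula against $g$, use that the intrinsic Ricci is $(n-2)$ and that convexity of the ambient $v$ forces the intrinsic weighted Bakry--Émery tensor to dominate $(n-1-\DR v)g$, and obtain the spectral inequality in the standard way; but invoking Kolesnikov--Milman directly is cleaner and is the route the paper evidently intends.
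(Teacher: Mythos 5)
There are two genuine problems here. First, you have misread the statement: in this paper $\DR v(x)=\langle\nabla v(x),x\rangle$ is the \emph{radial first derivative} of the ambient function $v$, not the radial part of its Hessian $\langle\nabla^{2}v(x)\,x,x\rangle$ as you define it. So the inequality you set out to prove is a different one, and your subsequent curvature bookkeeping is organized around the wrong quantity. Second, and more importantly, you apply the Kolesnikov--Milman theorem to the wrong manifold. The result quoted in the paper (their generalized Reilly/Colesanti inequality, Theorem~\ref{thm:kolesnikov-milman}) is a \emph{boundary} Poincar\'e inequality: it concerns a weighted manifold $M$ \emph{with boundary}, and the inequality it produces lives on $\partial M$, with the weighted mean curvature $H_{\mu}$ on the left and $\mathrm{II}_{\partial M}^{-1}$ on the right. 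The correct application is $M=B_{2}^{n}$ with the ambient weight $e^{-v}$, so that $\partial M=\SS^{n-1}$, the $CD(0,\infty)$ condition is exactly convexity of $v$ (no intrinsic spherical Ricci enters at all), $\mathrm{II}_{\SS^{n-1}}=\mathrm{Id}$, and $H_{\mu}(x)=n-1-\langle\nabla v(x),x\rangle$, which is precisely the weight in the proposition. Your plan to take $M=\SS^{n-1}$ itself, ``a compact manifold without boundary (so no boundary convexity condition is needed),'' does not match the structure of the theorem: with $\partial M=\emptyset$ there is nothing to apply it to, and you are in effect replacing it by an intrinsic Bakry--\'Emery/Lichnerowicz argument, which does not give the stated inequality.

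Your fallback Bochner argument on $\SS^{n-1}$ fails for concrete reasons. The intrinsic Hessian of the restriction of $v$ to the sphere is $\nabla^{2}v|_{T\times T}-\langle\nabla v,x\rangle\,g_{\SS}$, so convexity of the ambient $v$ only yields $\mathrm{Ric}_{\SS}+\mathrm{Hess}_{\SS}v\succeq(n-2-\DR v)\,g_{\SS}$, with $n-2$, not $n-1$; the missing $+1$ is exactly the contribution of the second fundamental form of $\SS^{n-1}=\partial B_{2}^{n}$, which only appears in the boundary (Reilly-type) formulation. Moreover, even granting a pointwise lower bound $\rho(x)$ on the Bakry--\'Emery tensor, the standard integrated-Bochner/duality argument produces an inequality of Brascamp--Lieb type, $\var_{\mu}g\le\int\rho^{-1}|\nabla_{\SS}g|^{2}\dd\mu$, not the stated $\int\rho\,g^{2}\dd\mu\le\int|\nabla_{\SS}g|^{2}\dd\mu$; these differ when $\rho$ is non-constant, and here $n-1-\DR v$ may well be negative on part of the sphere, so the proposition is not a spectral-gap statement that curvature lower bounds alone can deliver. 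To repair the proof, discard the intrinsic route and run the citation as the paper does: approximate $v$ by a $C^{2}$ convex function, take $M=B_{2}^{n}$ with $\dd\mu=e^{-v}\dd x$, verify $CD(0,\infty)$ from $\nabla^{2}v\succeq0$, identify $\mathrm{II}=\mathrm{Id}$ and $H_{\mu}=n-1-\DR v$, and use the mean-zero hypothesis on $g$ to drop the term $\bigl(\int_{\partial M}g\,\dd\mu_{\partial M}\bigr)^{2}/\mu(M)$.
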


Here and after, we use the notation $\DR v(x)=\left\langle \nabla v(x),x\right\rangle $
for the radial derivative and $\nabla_{\SS}g$ for the spherical gradient
of $g$. 

Indeed, generalizing a result of Colesanti (\cite{Colesanti2008}),
Kolesnikov and Milman proved the following very general inequality:
\begin{thm}[\cite{Kolesnikov2018}]
\label{thm:kolesnikov-milman}Let $\left(M,g\right)$ be a compact,
smooth, complete, connected and oriented $n$-dimensional Riemannian
manifold with boundary $\partial M$. Let $\dd \mu=e^{-v}\dd\text{Vol}_{M}$
be a measure on $M$, where $v:M\to\RR$ is $C^{2}$-smooth. 

Assume $(M,g,\mu)$ satisfies the $CD(0,N)$ condition for some $N$
such that $\frac{1}{N}\in[-\infty,\frac{1}{n}]$, and that $\mathrm{II}_{\partial M} \succ 0$.
Then for every $f\in C^{1}\left(\partial M\right)$ we have 
\[
\int_{\partial M}H_{\mu}f^{2}\dd\mu_{\partial M}-\frac{N-1}{N}\cdot\frac{\left(\int_{\partial M}f\dd\mu_{\partial M}\right)^{2}}{\mu(M)}\le\int_{\partial M}\left\langle \mathrm{II}_{\partial M}^{-1}\nabla_{\partial M}f,\nabla_{\partial M}f\right\rangle \dd\mu_{\partial M}.
\]
 
\end{thm}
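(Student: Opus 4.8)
The plan is to derive the inequality from a \emph{generalized Reilly formula} for the weighted manifold $(M,g,\mu)$, applied to the solution of an auxiliary Neumann problem, and then to feed in the curvature--dimension hypothesis and the convexity of $\partial M$ through a completion-of-squares step. Write $L_{\mu}=\Delta-\langle\nabla v,\nabla\,\cdot\,\rangle$ for the weighted Laplacian and $\mathrm{Ric}_{\mu}=\mathrm{Ric}+\nabla^{2}v$ for the Bakry--\'Emery tensor, and recall the divergence identity $\int_{M}L_{\mu}\varphi\,\dd\mu=\int_{\partial M}\partial_{\nu}\varphi\,\dd\mu_{\partial M}$, where $\nu$ is the outer unit normal and $\dd\mu_{\partial M}=e^{-v}\dd\mathrm{Vol}_{\partial M}$. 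Integrating the weighted Bochner identity $\tfrac12 L_{\mu}|\nabla u|^{2}=|\nabla^{2}u|^{2}+\langle\nabla u,\nabla L_{\mu}u\rangle+\mathrm{Ric}_{\mu}(\nabla u,\nabla u)$ against $\dd\mu$ and integrating $\langle\nabla u,\nabla L_\mu u\rangle$ by parts once more yields
\[
\int_{M}(L_{\mu}u)^{2}\,\dd\mu=\int_{M}\bigl(|\nabla^{2}u|^{2}+\mathrm{Ric}_{\mu}(\nabla u,\nabla u)\bigr)\dd\mu+\int_{\partial M}\bigl((L_{\mu}u)\,\partial_{\nu}u-\tfrac12\partial_{\nu}|\nabla u|^{2}\bigr)\dd\mu_{\partial M}.
\]
Splitting $\nabla u$ into its tangential and normal parts on $\partial M$ and using the Gauss--Weingarten relations (with the sign convention under which $\mathrm{II}_{\partial M}\succ0$ for convex boundaries) one has $L_{\mu}u=\nabla^{2}u(\nu,\nu)+L_{\mu,\partial M}(u|_{\partial M})+H_{\mu}\,\partial_{\nu}u$ and $\tfrac12\partial_{\nu}|\nabla u|^{2}=\nabla^{2}u(\nu,\nu)\,\partial_{\nu}u+\langle\nabla_{\partial M}u,\nabla_{\partial M}\partial_{\nu}u\rangle-\langle\mathrm{II}_{\partial M}\nabla_{\partial M}u,\nabla_{\partial M}u\rangle$, where $H_{\mu}=H-\langle\nabla v,\nu\rangle$; substituting, the $\nabla^{2}u(\nu,\nu)$ terms cancel, and a last integration by parts on the \emph{closed} manifold $\partial M$ converts $\int_{\partial M}(\partial_{\nu}u)\,L_{\mu,\partial M}(u|_{\partial M})\,\dd\mu_{\partial M}$ into $-\int_{\partial M}\langle\nabla_{\partial M}\partial_{\nu}u,\nabla_{\partial M}u\rangle\,\dd\mu_{\partial M}$. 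This gives the generalized Reilly formula
\[
\int_{M}(L_{\mu}u)^{2}\dd\mu=\int_{M}\bigl(|\nabla^{2}u|^{2}+\mathrm{Ric}_{\mu}(\nabla u,\nabla u)\bigr)\dd\mu+\int_{\partial M}\Bigl(H_{\mu}(\partial_{\nu}u)^{2}+\langle\mathrm{II}_{\partial M}\nabla_{\partial M}u,\nabla_{\partial M}u\rangle-2\langle\nabla_{\partial M}\partial_{\nu}u,\nabla_{\partial M}u\rangle\Bigr)\dd\mu_{\partial M},
\]
valid for $u$ smooth enough; by density it suffices to treat $u\in C^{3}$, which is where we use $v\in C^{2}$.

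Now, given $f\in C^{\infty}(\partial M)$, set $c:=\mu(M)^{-1}\int_{\partial M}f\,\dd\mu_{\partial M}$ and solve the Neumann problem $L_{\mu}u=c$ in $M$, $\partial_{\nu}u=f$ on $\partial M$; the divergence theorem shows the compatibility condition is precisely $c\,\mu(M)=\int_{\partial M}f\,\dd\mu_{\partial M}$, so a solution exists, is unique up to a constant, and is regular by elliptic theory. For this $u$ the left-hand side of the Reilly formula equals $c^{2}\mu(M)$. The hypothesis $CD(0,N)$ is, by definition, the pointwise bound $|\nabla^{2}u|^{2}+\mathrm{Ric}_{\mu}(\nabla u,\nabla u)=\Gamma_{2}(u)\ge\tfrac1N(L_{\mu}u)^{2}$, so the first term on the right is $\ge\tfrac1N c^{2}\mu(M)$. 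On $\partial M$ we have $\nabla_{\partial M}\partial_{\nu}u=\nabla_{\partial M}f$, and since $\mathrm{II}_{\partial M}\succ0$, expanding $\langle\mathrm{II}_{\partial M}(\nabla_{\partial M}u-\mathrm{II}_{\partial M}^{-1}\nabla_{\partial M}f),\nabla_{\partial M}u-\mathrm{II}_{\partial M}^{-1}\nabla_{\partial M}f\rangle\ge0$ gives $\langle\mathrm{II}_{\partial M}\nabla_{\partial M}u,\nabla_{\partial M}u\rangle-2\langle\nabla_{\partial M}f,\nabla_{\partial M}u\rangle\ge-\langle\mathrm{II}_{\partial M}^{-1}\nabla_{\partial M}f,\nabla_{\partial M}f\rangle$. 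Substituting both estimates into the Reilly formula,
\[
c^{2}\mu(M)\ge\tfrac1N c^{2}\mu(M)+\int_{\partial M}H_{\mu}f^{2}\,\dd\mu_{\partial M}-\int_{\partial M}\langle\mathrm{II}_{\partial M}^{-1}\nabla_{\partial M}f,\nabla_{\partial M}f\rangle\,\dd\mu_{\partial M}.
\]
Since by hypothesis $\tfrac1N\le\tfrac1n<1$, the coefficient $1-\tfrac1N=\tfrac{N-1}{N}$ is positive, so rearranging and using $c^{2}\mu(M)=\mu(M)^{-1}\bigl(\int_{\partial M}f\,\dd\mu_{\partial M}\bigr)^{2}$ yields the asserted inequality for $f\in C^{\infty}(\partial M)$; the general case $f\in C^{1}(\partial M)$ follows by approximation, both sides being continuous under $C^{1}$ convergence.

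The main obstacle is the first step: establishing the generalized Reilly formula with the correct boundary terms — identifying $H_{\mu}=H-\langle\nabla v,\nu\rangle$ as the right weighted mean curvature, and tracking every sign through the tangential/normal splitting and the Gauss--Weingarten formulas — and verifying that the solution of the Neumann problem is regular enough (given only $v\in C^{2}$) to justify Bochner's identity, the pointwise $\Gamma_{2}$ bound, and all the integrations by parts; the last points are handled by the standard device of approximating $C^{2}$ data by smooth data. Once the Reilly formula is available, the deduction is the short algebraic argument above, and one sees that equality forces equality in the $CD(0,N)$ inequality for $u$ together with $\nabla_{\partial M}u=\mathrm{II}_{\partial M}^{-1}\nabla_{\partial M}f$ on $\partial M$.
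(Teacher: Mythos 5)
Your argument is correct and follows essentially the same route the paper attributes to Kolesnikov--Milman for this cited theorem: the generalized Reilly formula applied to the solution of the Neumann problem $L_{\mu}u\equiv\frac{1}{\mu(M)}\int_{\partial M}f\,\dd\mu_{\partial M}$ with $\partial_{\nu}u=f$, followed by the $CD(0,N)$ estimate in the interior and completion of the square in $\mathrm{II}_{\partial M}$ on the boundary. The only minor imprecision is that, with the paper's tensor definition of $CD(0,N)$, the pointwise bound $\left\Vert\nabla^{2}u\right\Vert^{2}+\mathrm{Ric}_{\mu}(\nabla u,\nabla u)\ge\frac{1}{N}(L_{\mu}u)^{2}$ is not literally the definition but follows from it via the standard Cauchy--Schwarz step $\frac{(\Delta u)^{2}}{n}+\frac{\langle\nabla v,\nabla u\rangle^{2}}{N-n}\ge\frac{(L_{\mu}u)^{2}}{N}$, valid for the stated range $\frac{1}{N}\in[-\infty,\frac{1}{n}]$.
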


To explain the notation of the theorem, we say that $\left(M,g,\mu\right)$
satisfies the $CD(0,N)$ condition if 
\[
\mathrm{Ric}_{g,\mu}:=\mathrm{Ric}_{g}+\nabla^{2}v-\frac{1}{N-n}\dd v\otimes\dd v\succeq0
\]
as a $2$-tensor, where $\mathrm{Ric}_{g}$ denotes the classical
Ricci curvature. Furthermore $\mathrm{II}_{\partial M}$ denotes the
second fundamental form, and $H_{\mu}(x)=\tr\left(\mathrm{II}_{\partial M}(x)\right)-\left\langle \nabla v(x),\nu(x)\right\rangle $
denotes the weighted mean curvature of $\partial M$ at $x\in\partial M$,
where $\nu(x)$ is the outer unit normal to $\partial M$ at $x$. 

To see why Proposition \ref{prop:spherical-poincare} follows from
Theorem \ref{thm:kolesnikov-milman} we simply choose $M=B_{2}^{n}\subset\RR^{n}$,
the unit Euclidean ball with the standard Euclidean metric. By approximation
we may assume $v$ is $C^{2}$. Then for $\dd\mu=e^{-v}\dd x$ the
weighted manifold $\left(M,g,\mu\right)$ satisfies the $CD(0,\infty)$
condition since 
\[
\mathrm{Ric}_{g,\mu}=0+\nabla^{2}v+0=\nabla^{2}v\succeq0.
\]
 Moreover in this case $\mathrm{II}_{\SS^{n-1}}(x)$ is given by the
standard inner product on $\RR^{n}$ for all $x\in\SS^{n-1}=\partial B_{2}^{n}$,
so 
\[
H_{\mu}(x)=n-1-\left\langle \nabla v(x),x\right\rangle =n-1-\DR v(x).
\]
Plugging this into Theorem \ref{thm:kolesnikov-milman}, one obtains
Proposition \ref{prop:spherical-poincare}. 

Let us comment a bit more on this result. Kolesnikov and Milman obtained
their inequality using a general Reilly-type integration by parts
formula for the solution $u$ of the problem $\Delta_{g}u-\langle\nabla u,\nabla v\rangle\equiv\frac{1}{\mu(M)}\int_{\partial M}f\dd\mu_{\partial M}$
in the interior of $M$ and the normal derivative of $u$ on $\partial M$
equal to $f$. However, when $M$ is a convex body in $\RR^{n}$ this
inequality can be derived in a more elementary way by differentiating
the Brunn-Minkowski inequality (\ref{eq:BM0}) for the log-concave
measure $\dd\mu=e^{-v}\dd x$ – see \cite{Kolesnikov2018} (in particular
Theorem 6.6) and \cite{Kolesnikov2021} (in particular Proposition
3.2). When $v=0$ this is exactly what was done in \cite{Colesanti2008},
but it is absolutely crucial for us to have the correct inequality
for the weight $e^{-v}$. 

The second ingredient we will need for the proof is the following
one dimensional lemma. 
\begin{lem}
\label{lem:1d-poincare}Let $w,v:[0,\infty)\to\RR$ be continuous
functions and $C^{1}$ on $(0,\infty)$. Let $f$ be a $C^{1}$ function
on $[0,\infty)$ which is compactly supported (for simplicity) and
satisfies $f(0)=0$. Then for every $\alpha\ge0$ we have 
\[
\int_{0}^{\infty}\frac{w'}{t}f^{2}t^{\alpha}e^{-w-v}\dd t\le\int_{0}^{\infty}\left(\left(f'\right)^{2}+\alpha\cdot\left(\frac{f}{t}\right)^{2}-v'\frac{f^{2}}{t}\right)t^{\alpha}e^{-w-v}\dd t.
\]
\end{lem}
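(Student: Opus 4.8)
\textbf{Proof proposal for Lemma~\ref{lem:1d-poincare}.}

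The plan is to reduce this one-dimensional statement to an elementary one-dimensional Poincaré-type inequality by choosing a clever ``test function'' comparison, or equivalently by expanding a perfect-square term and integrating by parts. Concretely, I would start from the obvious inequality
\[
0\le\int_{0}^{\infty}\Bigl(f'-\frac{w'}{?}\,f\Bigr)^{2}t^{\alpha}e^{-w-v}\,\dd t,
\]
but the right weight to subtract is not immediately $w'$; rather, the natural candidate is to write $f=t\cdot g$ (legitimate since $f(0)=0$ and $f$ is $C^1$, so $g$ extends continuously to $0$) and expand $f'=g+tg'$. Then $(f')^{2}=g^{2}+2tgg'+t^{2}(g')^{2}$ and $\alpha(f/t)^{2}=\alpha g^{2}$, and the claimed inequality becomes, after dividing through by the common measure $t^{\alpha}e^{-w-v}\,\dd t$, an inequality purely about $g$:
\[
\int_{0}^{\infty} t\,w'\,g^{2}\,t^{\alpha}e^{-w-v}\,\dd t\le\int_{0}^{\infty}\Bigl(g^{2}+2tgg'+t^{2}(g')^{2}+\alpha g^{2}-t\,v'\,g^{2}\Bigr)t^{\alpha}e^{-w-v}\,\dd t.
\]
The term $2tgg'\,t^{\alpha}e^{-w-v} = (g^{2})'\,t^{\alpha+1}e^{-w-v}$ is then handled by integration by parts: its integral equals $-\int_{0}^{\infty}g^{2}\bigl((\alpha+1)t^{\alpha}-(w'+v')t^{\alpha+1}\bigr)e^{-w-v}\,\dd t$, with no boundary terms (at $0$ because of the factor $t^{\alpha+1}$, at $\infty$ because $f$, hence $g$, is compactly supported). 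Substituting this back, the $\alpha g^2$, the $g^2$, the $(\alpha+1)g^2$, the $v'$ and the $w'$ contributions are designed to cancel, and one is left with
\[
0\le\int_{0}^{\infty}t^{2}(g')^{2}\,t^{\alpha}e^{-w-v}\,\dd t,
\]
which is manifestly true. So the proof is really just this algebraic rearrangement plus one integration by parts.

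Let me double-check which terms survive: the left side contributes $+\int t\,w' g^{2}$; the right side contributes $+\int(1+\alpha)g^{2}+\int t^{2}(g')^{2} -\int t v' g^{2}$ from the explicit terms, plus $-\int(\alpha+1)g^{2}+\int(w'+v')t\,g^{2}$ from integrating by parts the cross term (all against $t^{\alpha}e^{-w-v}\,\dd t$). The $g^{2}$ terms cancel: $(1+\alpha)-(\alpha+1)=0$. The $v'$ terms cancel: $-\int tv'g^{2}+\int tv'g^{2}=0$. The $w'$ term on the right, $+\int tw'g^{2}$, exactly matches the left side. Hence the inequality collapses to $0\le\int t^{2}(g')^{2}t^{\alpha}e^{-w-v}\,\dd t$, which holds. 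This confirms the substitution $f=tg$ is the right move.

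The only genuinely delicate point is the \emph{regularity and boundary behaviour} needed to justify the integration by parts and the substitution $f=tg$. Since $f\in C^{1}[0,\infty)$ with $f(0)=0$, the function $g(t)=f(t)/t$ is continuous on $(0,\infty)$ and extends continuously to $t=0$ with $g(0)=f'(0)$; however $g$ need not be $C^{1}$ up to $0$ if $f$ is merely $C^{1}$. To be safe I would argue by density: approximate $f$ by $C^{\infty}$ functions vanishing near $0$ and supported in a fixed compact set, or simply observe that the displayed chain of equalities/inequalities only uses $g\in C^{1}(0,\infty)$ together with the integrability ensured by compact support and the local behaviour $g^{2}t^{\alpha+1}\to 0$ as $t\to 0^{+}$ (which follows from $g$ bounded near $0$ and $\alpha\ge0$). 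Note also that $w,v$ are only assumed continuous at $0$ and $C^1$ on $(0,\infty)$, so $w',v'$ may blow up at $0$; this is harmless because every integrand carries a factor $t^{\alpha+1}$ (in the integration-by-parts boundary term and in the $v',w'$ volume terms) or is otherwise controlled, but I would state explicitly the mild extra hypothesis that these integrals converge, or restrict to the regime where $e^{-w-v}$ and its relevant derivatives are integrable against the stated weights — which is exactly the setting in which the lemma will be applied. I expect this bookkeeping to be the main (though routine) obstacle; the inequality itself is an identity in disguise.
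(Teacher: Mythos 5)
Your proposal is correct and follows essentially the same route as the paper: both substitute $f=tg$, perform one integration by parts with vanishing boundary terms (you integrate the cross term $2tgg'$, the paper equivalently integrates the $w'$ term on the left-hand side), and reduce the claim to the nonnegativity of $\int_{0}^{\infty}t^{\alpha+2}(g')^{2}e^{-w-v}\,\dd t$. The regularity/boundary bookkeeping you flag is handled in the paper exactly as you suggest, so there is nothing further to add.
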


\begin{proof}
Since $f$ is $C^1$-smooth and $f(0)=0$ we may write $f(t)=t\cdot g(t)$ for a function $g$ continuous
on $[0,\infty)$, $C^{1}$ on $(0,\infty)$ and compactly supported.
It follows using integration by parts, since boundary terms vanish,
that 
\begin{align*}
\int_{0}^{\infty}\frac{w'}{t}f^{2}t^{\alpha}e^{-w-v}\dd t & =\int_{0}^{\infty}w'g^{2}t^{\alpha+1}e^{-w-v}\dd t=-\int_{0}^{\infty}\left(g^{2}t^{\alpha+1}e^{-v}\right)\left(e^{-w}\right)^{\prime}\dd t.\\
 & =\int_{0}^{\infty}\left(g^{2}t^{\alpha+1}e^{-v}\right)^{\prime}e^{-w}\dd t\\
 & =\int_{0}^{\infty}\left(2tgg'+(\alpha+1)g^{2}-v'g^{2}t\right)t^{\alpha}e^{-w-v}\dd t.
\end{align*}
 On the other hand, we have for the right-hand side:
\begin{multline*}
\int_{0}^{\infty}\left(\left(f'\right)^{2}+\alpha\cdot\left(\frac{f}{t}\right)^{2}-v'\frac{f^{2}}{t}\right)t^{\alpha}e^{-w-v}\dd t \\ 
\begin{aligned}
&=\int_{0}^{\infty}\left(\left(g+tg'\right)^{2}+\alpha\cdot g^{2}-v'tg^{2}\right)t^{\alpha}e^{-w-v}\dd t   \\
  &=\int_{0}^{\infty}\left(g^{2}+2tgg'+t^{2}\left(g'\right)^{2}+\alpha g^{2}-v'tg^{2}\right)t^{\alpha}e^{-w-v}\dd t.  \qquad
  \end{aligned}
\end{multline*}
 Comparing the two expressions we see that the difference between
the right hand side and the left hand side is exactly $\int_{0}^{\infty}\left(g'\right)^{2}t^{\alpha+2}e^{-w-v}\dd t$,
which is clearly non-negative. 
\end{proof}
We are now ready to prove Theorem \ref{thm:weighted-poincare}:
\begin{proof}[Proof of Theorem \ref{thm:weighted-poincare}]
Our finite measure $\nu$ is of the form $\dd\nu(x)=e^{-v(x)-w(|x|)}\, \dd x$ with $v:\RR^n\to\RR\cup\left\{ +\infty\right\}$ convex. We can assume that our $h$ satisfies $\nabla h \in L^2(\nu)$ since otherwise there is nothing to prove. 

We begin with some standard approximation arguments. First let us
note that we can assume that $h\in L^{2}(\nu)$. Actually, we can
assume that $h$ is bounded. Indeed, let us introduce for every $k\in\mathbb{N}^{\ast}$
a $C^{1}$ smooth non-decreasing odd function $R_{k}:\RR\to\RR$ such
that $R_{k}(t)=t$ for $t\in[0,k]$, $R_{k}(t)\equiv k+1$ for $t\ge k+2$,
$R_{k}(t)\le t$ and $R_k^{\prime}(t)\le1$ for every $t\in\RR^{+}.$ Then the 
functions $h_{k}:=R_{k}(h)$ satisfy $h_{k}=h$ on the open set $\{|h|<k\}$,
$|h_{k}|\uparrow|h|$ and $|\nabla h_{k}|\stackrel{\le}{\longrightarrow}|\nabla h|$.
Hence by monotone and dominated convergence, respectively,  we can
pass from the bounded functions $h_{k}$ to $h$ in our inequality. 

Next we reduce to the case that $h$ is compactly supported. The classical
argument is to introduce a smooth and radially decreasing function $\chi$ on $\RR^{n}$ with 
values in $[0,1]$ that is compactly supported and equals to $1$
in a neighborhood of $0$ and to set $\chi_{k}(x):=\chi(x/k)$. Then
$\chi_{k}\uparrow1$ and $|\nabla\chi_{k}|\le C/k$ for some constant
$C>0$. On one hand we have $\frac{w'\left(\left|x\right|\right)}{\left|x\right|}(h\chi_{k})^{2}\uparrow\frac{w'\left(\left|x\right|\right)}{\left|x\right|}h^{2}$
and, on the other hand,
\begin{align*}
\int|\nabla(h\chi_{k})|^{2}\,\dd\nu & =\int|\nabla h|^{2}\chi_{k}^{2}\,\dd\nu+2\int h\chi_{k}\langle\nabla h,\nabla\chi_{k}\rangle\,\dd\nu+\int h^{2}|\nabla\chi_{k}|^{2}\,\dd\nu\\
 & \le\int|\nabla h|^{2}\,\dd\nu+\frac{2C}{k}\sqrt{\int h^{2}\,\dd\nu\int|\nabla h|^{2}\,\dd\nu}+\frac{C^{2}}{k^{2}}\int h^{2}\,\dd\nu
\end{align*}
and this upper bound converges to $\int|\nabla h|^{2}\dd\nu$, as wanted. 

Finally, we approximate $w$ and $v$. By replacing $w(t)$ with $\max(w(t),-k)$
and invoking monotone convergence as $k\to\infty$, we can assume
that $w$ is continuous on the closed ray $[0,\infty)$ and $C^{1}$
on $(0,\infty)$ except maybe at one point, which is irrelevant. By
standard approximation we may also assume without loss of generality
that $v$ is smooth. 

These remarks being made, we compute the integrals for our compactly
supported function $h$ using polar coordinates. We obtain 
\[
\int\frac{w'\left(\left|x\right|\right)}{\left|x\right|}h^{2}\dd\nu=c_{n}\int_{\SS^{n-1}}\int_{0}^{\infty}\frac{w'(r)}{r}h^{2}(r\theta)r^{n-1}e^{-w(r)-v(r\theta)}\dd r\dd\theta.
\]
For a fixed $\theta\in\SS^{n-1}$ we will now apply Lemma \ref{lem:1d-poincare}
with $f_{\theta}(r)=h(r\theta)$, $v_{\theta}(r)=v(r\theta)$ and
$\alpha=n-1$. Note that $v_{\theta}'(r)=\left\langle \nabla v\left(r\theta\right),\theta\right\rangle =\frac{1}{r}\DR v\left(r\theta\right)$.
Therefore we can bound our integral by 
\[
c_{n}\int_{S^{n-1}}\int_{0}^{\infty}\left(\underbrace{\left\langle \nabla h(r\theta),\theta\right\rangle ^{2}}_{\mathrm{I}}+\underbrace{(n-1)\left(\frac{h(r\theta)}{r}\right)^{2}-\frac{1}{r}\DR v(r\theta)\cdot\frac{h(r\theta)^{2}}{r}}_{\mathrm{II}}\right)r^{n-1}e^{-w(r)-v(r\theta)}\dd r\dd\theta.
\]
We will leave term $\mathrm{I}$ as is for now. In order to bound
term $\mathrm{II}$ we change the order of integration: 
\begin{align*}
\mathrm{II} & =c_{n}\int_{0}^{\infty}\int_{S^{n-1}}(n-1-\DR v(r\theta))\left(\frac{h(r\theta)}{r}\right)^{2}r^{n-1}e^{-w(r)-v(r\theta)}\dd\theta\dd r\\
 & =c_{n}\int_{0}^{\infty}r^{n-1}e^{-w(r)}\left(\int_{\SS^{n-1}}(n-1-\DR v(r\theta))\left(\frac{h(r\theta)}{r}\right)^{2}e^{-v(r\theta)}\dd\theta\right)\dd r
\end{align*}
We now apply Proposition \ref{prop:spherical-poincare} to the inner
integral, with $v_{r}(\theta)=v(r\theta)$, $\mu_r = e^{-v_r} \dd \theta $ and
$g_{r}(\theta)=\frac{h(r\theta)}{r}$.
Note that since $v_r$ is even and $g_r$ is odd we indeed have $\int_{\SS^{n-1}} g_r \dd \mu_r = 0$ 
(this is in fact the only place where we use the fact that $h$ is odd). 
Also note that $\DR v_{r}(\theta)=\DR v(r\theta)$ and $\nabla_{\SS}g_{r}(\theta)=\nabla_{\SS}h(r\theta)$,
where for a function $h:\RR^{n}\to\RR$ the notation 
\[
\nabla_{\SS}h(x)=\nabla h(x)-\left\langle \nabla h(x),\frac{x}{\left|x\right|}\right\rangle \cdot\frac{x}{\left|x\right|}
\]
 denotes the tangential part of the gradient of $h$. We may therefore
apply the proposition and  conclude that 
\[
\mathrm{II}\le c_{n}\int_{0}^{\infty}r^{n-1}e^{-w(r)}\int_{\SS^{n-1}}\left|\nabla_{\SS}h(r\theta)\right|^{2}e^{-v(r\theta)}\dd\theta\dd r.
\]
 Using this estimate for $\mathrm{II}$ we conclude that 
\begin{align*}
\int\frac{w'\left(\left|x\right|\right)}{\left|x\right|}h^{2}\dd\nu & \le c_{n}\int_{\SS^{n-1}}\int_{0}^{\infty}\left(\left\langle \nabla h(r\theta),\theta\right\rangle ^{2}+\left|\nabla_{\SS}h(r\theta)\right|^{2}\right)r^{n-1}e^{-w(r)-v(r\theta)}\dd r\dd\theta\\
 & =c_{n}\int_{\SS^{n-1}}\int_{0}^{\infty}\left|\nabla h(r\theta)\right|^{2}r^{n-1}e^{-w(r)-v(r\theta)}\dd r\dd\theta\\
 & =\int\left|\nabla h(x)\right|^{2}e^{-w\left(\left|x\right|\right)-v(x)}\dd x=\int\left|\nabla h\right|^{2}\dd\nu,
\end{align*}
 completing the proof of Theorem \ref{thm:weighted-poincare}. 
\end{proof}
We remark that the proof above strongly uses the fact that $h$ is odd to deduce that the functions
$g_r e^{-v_r}$ are all centered. It is possible that Theorem \ref{thm:weighted-poincare} is true under 
a weaker assumption on $h$, but at the moment we do not know how to address this interesting question. 

To conclude this section let us prove that when $v=0$, that is $\dd\nu=e^{-w\left(\left|x\right|\right)}\dd x$,
we have equality in Theorem \ref{thm:weighted-poincare} for every
linear function $h(x)=\left\langle x,\theta\right\rangle $. By homogeneity
and rotation invariance it is enough to consider the function $h(x)=x_{1}$.
Formally, the result follows by integration by parts:
\[
\int\frac{w'\left(\left|x\right|\right)}{\left|x\right|}h^{2}\dd\nu=-\int\partial_{1}\left(e^{-w\left(\left|x\right|\right)}\right)x_{1}\dd x=\int1\cdot e^{-w\left(\left|x\right|\right)}\dd x=\int\left|\nabla h\right|^{2}\dd\nu.
\]
 To check this rigorously, introduce $A_{\epsilon,R}=\left\{ x\in\RR^{n}:\ \epsilon<\left|x\right|<R\right\} $
for $0<\epsilon<R<\infty$. Then using polar coordinates and integration
by parts we have 
\begin{align}
\int_{A_{\epsilon,R}}\frac{w'\left(\left|x\right|\right)}{\left|x\right|}h^{2}\dd\nu & =\frac{1}{n}\int_{A_{\epsilon,R}}\frac{w'\left(\left|x\right|\right)}{\left|x\right|}\left|x\right|^{2}e^{-w\left(\left|x\right|\right)}\dd x\label{eq:linear-equality}\\
 & =\frac{c_{n}}{n}\int_{\epsilon}^{R}w'(r)r^{n}e^{-w(r)}\dd r=-\frac{c_{n}}{n}\int_{\epsilon}^{R}r^{n}\left(e^{-w(r)}\right)^{\prime}\dd r \nonumber \\
 &=\frac{c_{n}}{n}\cdot\left(\epsilon^{n}e^{-w(\epsilon)}-R^{n}e^{-w(R)}\right)+c_{n}\int_{\epsilon}^{R}r^{n-1}e^{-w(r)}\dd r\nonumber \\
 & =\frac{c_{n}}{n}\cdot\left(\epsilon^{n}e^{-w(\epsilon)}-R^{n}e^{-w(R)}\right)+\int_{A_{\epsilon,R}}\left|\nabla h\right|^{2}\dd\nu.\nonumber 
\end{align}
Since the integrands are nonnegative the integrals $\int_{A_{\epsilon,R}}\frac{w'\left(\left|x\right|\right)}{\left|x\right|}h^{2}\dd\nu$
and $\int_{A_{\epsilon,R}}\left|\nabla h\right|^{2}\dd\nu$ have a
limit when $\epsilon\to0^{+}$ and $R\to\infty$, and the limits are
finite since $\int\frac{w'\left(\left|x\right|\right)}{\left|x\right|}h^{2}\dd\nu\le\int|\nabla h|^{2}\dd\nu<\infty$.
Therefore the limits $\lim_{\epsilon\to0^{+}}\epsilon^{n}e^{-w(\epsilon)}$
and $\lim_{R\to\infty}R^{n}e^{-w(R)}$ also exist. Since $\int_{0}^{\infty}r^{n-1}e^{-w(r)}\dd r=\frac{1}{c_{n}}\int e^{-w(\left|x\right|)}\dd x<\infty$,
both of these limits have to be 0. We may therefore let $\epsilon\to0^{+}$,
$R\to\infty$ in (\ref{eq:linear-equality}) and deduce that 
\[
\int\frac{w'\left(\left|x\right|\right)}{\left|x\right|}h^{2}\dd\nu=\int\left|\nabla h\right|^{2}\dd\nu.
\]
 as claimed. 

\section{\label{sec:BL}Improved Brascamp-Lieb inequality}

In this section, we give the proof of Theorem~\ref{thm:BL-even}.
So we are working with a probability measure $\nu$ whose density
is of the form $e^{-W(x)-v(x)}$ with $W(x)=w(|x|)$ where $w$ is
smooth and satisfies the assumptions of the theorem, and $v$ is an
arbitrary even convex function on $\RR^{n}$ with values on $\RR\cup\left\{ +\infty\right\} .$
In the applications, $e^{-v}$ will be the indicator of a symmetric
convex set. But by approximation, we can easily assume that $v$ is
finite and smooth on $\RR^{n}.$

The classical Hörmander–Brascamp–Lieb inequality states that for a
smooth integrable $f$ one has
\begin{align*}
\var_{\nu}f & \le\int\left\langle \left(\nabla^{2}W+\nabla^{2}v\right)^{-1}\nabla f,\nabla f\right\rangle \dd\nu\\
 & \le\int\left\langle \left(\nabla^{2}W\right)^{-1}\nabla f,\nabla f\right\rangle \dd\nu
\end{align*}

Since $\frac{w'\left(\left|x\right|\right)}{\left|x\right|}\cdot {\rm Id}\succeq0$,
the conclusion of Theorem \ref{thm:BL-even} is clearly stronger than
this last inequality, but of course we are assuming that $f$ is even.
Recall that $f(x)=\left\langle (\nabla W+\nabla v)(x),\theta\right\rangle $
 is an equality case in the first inequality, but this function is
odd in our case. 

A direct computation of $\nabla^{2}W$ shows that for every $x\neq0,$
\[
\nabla^{2}W(x)=w''\left(\left|x\right|\right)\frac{x}{\left|x\right|}\otimes\frac{x}{\left|x\right|}+\frac{w'\left(\left|x\right|\right)}{\left|x\right|}\cdot\left({\rm Id}-\frac{x}{\left|x\right|}\otimes\frac{x}{\left|x\right|}\right),
\]
 so one can write 
\begin{equation}
\nabla^{2}W(x)+\frac{w'\left(\left|x\right|\right)}{\left|x\right|}\cdot {\rm Id}=\left(w''\left(\left|x\right|\right)+\frac{w'\left(\left|x\right|\right)}{\left|x\right|}\right)\frac{x}{\left|x\right|}\otimes\frac{x}{\left|x\right|}+2\frac{w'\left(\left|x\right|\right)}{\left|x\right|}\cdot\left({\rm Id}-\frac{x}{\left|x\right|}\otimes\frac{x}{\left|x\right|}\right).\label{eq:matrix-explicit}
\end{equation}
The condition that $t\mapsto w(e^{t})$ is convex implies that $w''(s)+\frac{w'(s)}{s}\ge0$
for all $s>0$. Hence the expression above shows that $\nabla^{2}W(x)+\frac{w'\left(\left|x\right|\right)}{\left|x\right|}\cdot {\rm Id}\succeq0$,
which we will use in the proof. 

\begin{rem}\label{rem:dual}In the statement of Theorem~\ref{thm:BL-even} and in its proof below, we encounter  expression like $\langle\big(\nabla^{2}W+\frac{w'\left(\left|x\right|\right)}{\left|x\right|}\cdot {\rm Id}\big)^{-1}a, a\rangle $ for some $a\in \RR^n$. When the matrix is singular,  one should rather use the polar form  $Q_x^\circ(a)=\sup\{\langle a, b\rangle^2\; : \; Q_x(b)\le 1\}\in[0,+\infty]$  of the quadratic form $b\mapsto Q_x(b):=\langle\big(\nabla^{2}W+\frac{w'\left(\left|x\right|\right)}{\left|x\right|}\cdot {\rm Id}\big)b, b\rangle$. Indeed, the only property we need is that $\frac12 Q_x^\circ(a)+\frac12 Q_x(b)\ge \langle a, b\rangle$, for all $a,b\in \RR^n$.
\end{rem}

\begin{proof}[Proof of Theorem \ref{thm:BL-even}]
With the notation above, consider the even function $V:=W+v$ so that $\frac{\dd\nu}{\dd x}=e^{-V}$;
we already mentioned that by approximation $v$ can be assumed to
be $C^{2}$-smooth so $V$ is $C^{2}$ as well. Since $\nu$ is
log-concave with respect to $e^{-W}$ it follows that $\nabla^{2}V\succeq\nabla^{2}W$
as positive definite matrices. Also write $A(x)=\nabla^{2}W(x)+\frac{w'\left(\left|x\right|\right)}{\left|x\right|}\cdot {\rm Id}.$

Consider the operator $Lu=\Delta u-\langle\nabla V,\nabla u\rangle$,
that is the Laplace operator $\nabla^{\ast}\nabla$ on $L^{2}(\nu)$.
We are given an even function $f$. We can add a constant to $f$
and assume without loss of generality that $\int f\dd\nu=0$. It is
well known then that $f$ can be approximated in $L^2(\nu)$ by functions of the 
form $Lu$ for smooth compactly supported $u$ (see for instance \cite{Cordero-Erausquin2004}).
Moreover, since $V$ is even and $f$ is even we can also assume that
$u$ is even. Therefore it is enough to prove 
\[
\int\left(\left(Lu-f\right)^{2}-f^{2}+\left\langle A^{-1}\nabla f,\nabla f\right\rangle \right)\dd\nu\ge0,
\]
 that is
\[
\int\left(\left(Lu\right)^{2}-2Lu\cdot f+\left\langle A^{-1}\nabla f,\nabla f\right\rangle \right)\dd\nu\ge0.
\]
 Integrating by parts we see that 
\begin{align*}
\int Lu\cdot f\dd\nu & =-\int\left\langle \nabla u,\nabla f\right\rangle \dd\nu,\\
\int\left(Lu\right)^{2}\dd\nu & =\int\left(\left\Vert \nabla^{2}u\right\Vert _{2}^{2}+\left\langle \left(\nabla^{2}V\right)\cdot\nabla u,\nabla u\right\rangle \right)\dd\nu\\
 & \ge\int\left(\left\Vert \nabla^{2}u\right\Vert _{2}^{2}+\left\langle \left(\nabla^{2}W\right)\cdot\nabla u,\nabla u\right\rangle \right)\dd\nu,
\end{align*}
 where $\left\Vert A\right\Vert _{2}=\sqrt{\tr\left(AA^{\ast}\right)}$ is
the Hilbert-Schmidt norm. Therefore it is enough to prove the inequality
\[
\int\left(\left\Vert \nabla^{2}u\right\Vert _{2}^{2}+\left\langle \left(\nabla^{2}W\right)\nabla u,\nabla u\right\rangle +2\left\langle \nabla u,\nabla f\right\rangle +\left\langle A^{-1}\nabla f,\nabla f\right\rangle \right)\dd\nu\ge0.
\]
We have the pointwise identity
\[
\left|A^{-\frac{1}{2}}\nabla f+A^{\frac{1}{2}}\nabla u\right|^{2}=\left\langle A^{-1}\nabla f,\nabla f\right\rangle +2\left\langle \nabla f,\nabla u\right\rangle +\left\langle A\cdot\nabla u,\nabla u\right\rangle ,
\]
so our goal can be written as
\[ 
\int\left(\left\Vert \nabla^{2}u\right\Vert _{2}^{2}+\left\langle \left(\nabla^{2}W\right)\nabla u,\nabla u\right\rangle +\left|A^{-\frac{1}{2}}\nabla f+A^{\frac{1}{2}}\nabla u\right|^2-\left\langle A\cdot\nabla u,\nabla u\right\rangle \right)\dd\nu\ge0.
\]
 As $\left|A^{-\frac{1}{2}}\nabla f+A^{\frac{1}{2}}\nabla u\right|^2\ge0$ (this corresponds to the duality relation recalled in Remark~\ref{rem:dual} above), it is therefore
enough to prove that 
\begin{equation} 
\int\frac{w'\left(\left|x\right|\right)}{\left|x\right|}\left|\nabla u\right|^{2}\dd\nu=\int\left\langle \left(A-\nabla^{2}W\right)\cdot\nabla u,\nabla u\right\rangle \dd\nu\le\int\left\Vert \nabla^{2}u\right\Vert _{2}^{2}\dd\nu.\label{eq:BL-goal}
\end{equation}
But this follows form Theorem \ref{thm:weighted-poincare}: Every
derivative $h_{i}=\frac{\partial u}{\partial x_{i}}$ is odd, so by
the Theorem \ref{thm:weighted-poincare} we have
\[
\int\frac{w'\left(\left|x\right|\right)}{\left|x\right|}h_{i}^{2}\dd\nu\le\int\left|\nabla h_{i}\right|^{2}\dd\nu.
\]
 Summing over $1\le i\le n$ we obtain the desired inequality (\ref{eq:BL-goal}). 
\end{proof}

\section{\label{sec:the-B-property}The (B) property}

In this section we prove Theorem \ref{thm:B-result} in the functional
form (\ref{eq:functional_B_property}):
\begin{proof}[Proof of Theorem \ref{thm:B-result}]
By approximation we may assume $w$ is well defined on $[0,\infty)$
and $C^{2}$-smooth there with $w'' > 0$. Write $W(x):=w(|x|)$. 

As we said, we will prove the more general form (\ref{eq:functional_B_property}).
So we fix an arbitrary \emph{even} convex function $v:\RR^{n}\to\RR\cup\left\{ +\infty\right\} $,
and our goal is to prove that 
\begin{equation}
t\mapsto\int_{\RR^{n}}e^{-v(e^{tA}y)-W(y)}\dd y=e^{-t\cdot\tr A}\cdot\int_{\RR^{n}}e^{-v\left(x\right)-W\left(e^{-tA}x\right)}\dd x\label{eq:change-of-vars}
\end{equation}
is log-concave. Since the function $t\mapsto e^{-t\cdot\tr A}$ is
clearly log-linear, and since the change of variables $t\mapsto-t$
preserves log-concavity, we are led to prove that the function 
\[
\rho_{v}(t)=\int_{\RR^{n}}e^{-v\left(x\right)-W\left(e^{tA}x\right)}\dd x
\]
 is log-concave. 

To do so we need to show that $\rho_{v}(t)\rho_{v}''(t)\le\rho_{v}'(t)^{2}$ 
for every $t\in\RR$. However from the change of variables (\ref{eq:change-of-vars})
one can check that $\rho_{v}(t+h)=\rho_{\tilde{v}}(h)$ where $\tilde{v}(x)=v(e^{-tA}x)+t\cdot\tr A$
is again an even convex function. So we see it is enough to show that
$\rho(0)\rho''(0)\le\rho'(0)^{2}$ for $\rho:=\rho_{v}$ and $v$ is 
an arbitrary even convex function. 

Computing these derivatives we obtain 
\begin{align*}
\rho^{\prime}(t) & =-\int e^{-W(e^{tA}x)}\left\langle \nabla W(e^{tA}x),Ae^{tA}x\right\rangle e^{-v(x)}\,\dd x\\
\rho^{\prime\prime}(t) & =\int e^{-W\left(e^{tA}x\right)}\left\langle \nabla W(e^{tA}x),Ae^{tA}x\right\rangle ^{2}e^{-v(x)}\,\dd x\\
 & \qquad-\int e^{-W\left(e^{tA}x\right)}\left(\left\langle \nabla^{2}W(e^{tA}x)\cdot Ae^{tA}x,Ae^{tA}x\right\rangle +\left\langle \nabla W\left(e^{tA}x\right),A^{2}e^{tA}x\right\rangle \right)e^{-v(x)}\,\dd x,
\end{align*}
 so the condition $\rho(0)\rho''(0)\le\rho'(0)^{2}$ becomes 
\begin{align*}
\int e^{-v(x)}\,\dd\mu(x) & \cdot\int\left(\left\langle \nabla W,Ax\right\rangle ^{2}-\left\langle \nabla^{2}W\cdot Ax,Ax\right\rangle -\left\langle \nabla W,A^{2}x\right\rangle \right)e^{-v(x)}\dd\mu(x)\\
 & \le\left(\int\left\langle \nabla W,Ax\right\rangle e^{-v(x)}\,\dd\mu(x)\right)^{2}.
\end{align*}
 Introduce the probability measure 
\[
\dd\nu(x)=\frac{e^{-v(x)}}{\int e^{-v}\,\dd\mu}\,\dd\mu(x).
\]
Our aim is to prove that
\[
\int\left(\left\langle \nabla W,Ax\right\rangle ^{2}-\left\langle \nabla^{2}W\cdot Ax,Ax\right\rangle -\left\langle \nabla W,A^{2}x\right\rangle \right)\dd\nu\le\left(\int\left\langle \nabla W,Ax\right\rangle \dd\nu\right)^{2},
\]
 that is
\begin{equation}
\int\left\langle \nabla W,Ax\right\rangle ^{2}\dd\nu-\left(\int\left\langle \nabla W,Ax\right\rangle \dd\nu\right)^{2}\le\int\left(\left\langle \nabla^{2}W\cdot Ax,Ax\right\rangle +\left\langle \nabla W,A^{2}x\right\rangle \right)\dd\nu.\label{eq:local-B-ineq}
\end{equation}
Actually, this aimed inequality is really equivalent to the strong
(B) inequality for $\mu.$ We claim that this inequality follows from
Theorem \ref{thm:BL-even} for the function 
\[
f_{0}(x):=\left\langle \nabla W(x),Ax\right\rangle .
\]
Indeed, note first that Theorem \ref{thm:BL-even} is applicable since
$\nu$ is even and log-concave with respect to $\mu$ and $f_{0}$
is even. Next we have to interpret correctly the right-hand side of
(\ref{eq:local-B-ineq}). Note that 
\[
\nabla f_{0}=\nabla^{2}W\cdot Ax+A\cdot\nabla W
\]
and so 
\[
\left\langle \nabla^{2}W\cdot Ax,Ax\right\rangle +\left\langle \nabla W,A^{2}x\right\rangle =\left\langle \nabla f_{0},Ax\right\rangle 
\]
where we have used the symmetry of $A$. This means that (\ref{eq:local-B-ineq}) can be written as
\[
\textrm{Var}_{\nu}(f_{0})\le\int\langle\nabla f_{0},Ax\rangle\,\dd\nu(x).
\]
But since $\nabla W(x)=\frac{w'\left(\left|x\right|\right)}{\left|x\right|}x$
we can write 
\begin{equation}
\nabla f_{0}=\left(\nabla^{2}W+\frac{w'\left(\left|x\right|\right)}{\left|x\right|}{\rm Id}\right)Ax\label{eq:relation}
\end{equation}
which implies that 
\[
\left\langle \nabla f_{0},Ax\right\rangle =\left\langle \left(\nabla^{2}W+\frac{w'\left(\left|x\right|\right)}{\left|x\right|}{\rm Id}\right)^{-1}\nabla f_{0},\nabla f_{0}\right\rangle .
\]
 This shows that (\ref{eq:local-B-ineq}) follows from Theorem
\ref{thm:BL-even} as claimed. 
\end{proof}
There is a hidden but crucial choice behind the apparently trivial
relation (\ref{eq:relation}). Indeed, let us consider the simple
case where $w(t)=t^{p}/p$ (so $W(x)=|x|^{p}/p)$ and $A={\rm Id}$.
The equivalent formulation (\ref{eq:local-B-ineq}) of the (B) inequality
is then 
\[
\textrm{Var}_{\nu}(f_{0})=\textrm{Var}_{\nu}(|\cdot|^{p})\le p\int|x|^{p}\,\dd\nu(x)=\int\langle\nabla f_{0},x\rangle\,\dd\nu
\]
for $f_{0}(x)=|x|^{p}$ and $\nu$ an even probability measure which is log-concave
with respect to $e^{-W(x)}\dd x=e^{-|x|^{p}/p}\,\dd x.$ We see that there 
are several possible interpretations of the last term, since we have
both
\[
x=\frac{p-1}{p}(\nabla^{2}W)^{-1}\cdot\nabla f_{0}\qquad\textrm{and \ensuremath{\qquad x=}\ensuremath{\frac{1}{p|x|^{p-2}}\,\nabla f_{0}.}}
\]
By the way, here we can invoke homogeneity to shorten computations
since $f_{0}=pW$. Every choice of a matrix-valued function $B(x)$
such that $B\cdot x=\nabla f_{0}$ leads to the natural question whether
the corresponding Brascamp-Lieb type inequality $\textrm{Var}_{\nu}(f)\le\int\langle B^{-1}\nabla f,\nabla f\rangle\dd\nu$
holds for every smooth even $f$; This would imply  the (B) inequality.
The two formulas above coincide in the case of the Gaussian measure
($p=2)$, but not in general. Our choice (\ref{eq:relation}) is some
combination of the two:
\[
x=\left(\nabla^{2}W+|x|^{p-2}{\rm Id}\right)^{-1}\nabla f_{0}(x).
\]

\section{Dimensional Brunn-Minkowski inequality}

In this section we prove Theorem \ref{thm:GZ}, that is the Gardner-Zvavitch
conjecture for rotationally invariant measures. We heavily rely on
the computations done by Kolesnikov and Livshyts, and on the ideas
introduced by Eskenazis and Moschidis in their solution of the Gaussian
case. 

As for the Brunn-Minkowski inequality and the (B) inequality, we can
prove this $\frac{1}{n}$-concavity by computing the second derivative
in the parameter $\lambda.$ This was done by Kolesnikov and Livshyts
\cite{Kolesnikov2021} (see Lemma 2.3), who found the following neat
\emph{sufficient} condition; this result is somehow a substitute to the duality argument used in the proof of Theorem~\ref{thm:BL-even}.  Below, the notation $\mu_{K}$ refers
to the normalized restriction of a measure $\mu$ to a set $K$ with
$\mu(K)<\infty$. 
\begin{thm}
\label{thm:kolesnikov-livshyts}Let $\mu$ be a locally finite measure
on $\RR^{n}$ with density $e^{-W}$. Assume that for every symmetric
convex body $K\subset\RR^{n}$ and every smooth even function $u:K\to\RR$
with $Lu:=\Delta u-\langle\nabla W,\nabla u\rangle\equiv1$ in $K$
we have 
\[
\int\left(\left\Vert \nabla^{2}u\right\Vert _{2}^{2}+\left\langle \nabla^{2}W\cdot\nabla u,\nabla u\right\rangle \right)\dd\mu_{K}\ge\frac{1}{n}.
\]
 Then for every symmetric convex bodies $K,L\subseteq\RR^{n}$ and
every $0 \le \lambda \le 1$ we have 
\[
\mu\left((1-\lambda)K+\lambda L\right)^{\frac{1}{n}}\ge(1-\lambda)\mu(K)^{\frac{1}{n}}+\lambda\mu(L)^{\frac{1}{n}}.
\]
 
\end{thm}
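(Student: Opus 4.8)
The plan is to establish the inequality infinitesimally and then integrate it up. By a standard approximation we may assume $W\in C^{\infty}$ and that $K,L$ are smooth, uniformly convex, symmetric bodies (Hausdorff convergence of the bodies together with the continuity of $\mu$ on convex bodies transfers the final inequality back to the general case); under these assumptions $\lambda\mapsto\mu(K_\lambda)$ is smooth, where $K_\lambda:=(1-\lambda)K+\lambda L$. Since $K_{\lambda_0+s}=(1-s')K_{\lambda_0}+s'L$ with $s'=s/(1-\lambda_0)$, and $K_{\lambda_0}$ is again a smooth uniformly convex symmetric body, it suffices to prove, for every such pair $K,L$, the single second-order inequality
\[
\phi''(0)\,\phi(0)\ \le\ \tfrac{n-1}{n}\,\phi'(0)^2,\qquad \phi(s):=\mu\big((1-s)K+sL\big);
\]
this is exactly $\tfrac{\dd^2}{\dd s^2}\mu(K_s)^{1/n}\big|_{s=0}\le 0$, and applying it to the pair $(K_{\lambda_0},L)$ yields $\tfrac{\dd^2}{\dd\lambda^2}\mu(K_\lambda)^{1/n}\le 0$ at every $\lambda_0\in(0,1)$, hence the $\tfrac1n$-concavity asserted by the theorem.

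To compute the derivatives, parametrize: let $h=h_K$ be the support function of $K$ on $\SS^{n-1}$, set $f:=h_L-h_K$, so that $K_s$ has support function $h+sf$, and let $\tilde f$ be the pull-back of $f$ to $\partial K$ via the Gauss map. As $K=-K$ and $L=-L$, the function $f$ is even on $\SS^{n-1}$ and $\tilde f$ is even on $\partial K$. A first variation of $\mu(K_s)=\int_{K_s}e^{-W}\dd x$ gives $\phi'(0)=\int_{\partial K}\tilde f\,\dd\mu_{\partial K}$, where $\dd\mu_{\partial K}:=e^{-W}\dd\mathcal H^{n-1}$ is the weighted surface measure on $\partial K$. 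Differentiating once more — keeping track of the normal speed $f$ of $\partial K_s$, its tangential displacement $\nabla_\SS f$, the variation of the curvature Jacobian, and the variation of $W$ along the boundary — expresses $\phi''(0)$ as a boundary integral whose integrand is a quadratic expression in $\tilde f$ and $\nabla_{\partial K}\tilde f$ built from the second fundamental form of $\partial K$, its weighted mean curvature, and the normal derivative of $W$. This is the second-variation computation carried out by Kolesnikov and Livshyts.

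The decisive step is to introduce the weighted Neumann problem
\[
Lu=c\ \text{ in }K,\qquad \partial_\nu u=\tilde f\ \text{ on }\partial K,
\]
where the constant $c$ is fixed by the compatibility relation $c\,\mu(K)=\int_{\partial K}\tilde f\,\dd\mu_{\partial K}=\phi'(0)$. When $\phi'(0)\ne 0$ this problem has a solution $u$, smooth by elliptic regularity, and — since $K=-K$, $W$ is even and $\tilde f$ is even — replacing $u$ by $\tfrac12\big(u(x)+u(-x)\big)$ we may take $u$ even. The weighted Reilly formula applied to this $u$ converts the boundary integral computing $\phi''(0)$ into the bulk expression
\[
\phi''(0)\ =\ c^2\mu(K)\ -\ \int_K\Big(\|\nabla^2 u\|_2^2+\langle\nabla^2 W\,\nabla u,\nabla u\rangle\Big)\dd\mu,
\]
using that $\int_K(Lu)^2\dd\mu=c^2\mu(K)=\phi'(0)^2/\mu(K)$. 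Feeding this into $\phi''(0)\phi(0)\le\frac{n-1}{n}\phi'(0)^2$ and simplifying (here $\phi(0)=\mu(K)>0$), the desired inequality becomes equivalent to
\[
\int_K\Big(\|\nabla^2 u\|_2^2+\langle\nabla^2 W\,\nabla u,\nabla u\rangle\Big)\dd\mu\ \ge\ \tfrac1n\,c^2\mu(K).
\]

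Dividing by $c^2$ and writing $\bar u:=u/c$, so that $L\bar u\equiv1$ and $\bar u$ is even, the last display is precisely $\int\big(\|\nabla^2\bar u\|_2^2+\langle\nabla^2 W\,\nabla\bar u,\nabla\bar u\rangle\big)\dd\mu_K\ge\frac1n$, which is the hypothesis of the theorem. The degenerate case $\phi'(0)=0$ is handled by applying the result to the pair $(K,L+\varepsilon B_{2}^{n})$ — whose support-function perturbation is $f+\varepsilon$, so that $\phi'(0)$ is shifted to $\varepsilon\,\mu_{\partial K}(\partial K)>0$ — and letting $\varepsilon\downarrow 0$. The main obstacle is the pair of computations in the third paragraph: carrying out the second variation of $\mu(K_s)$ cleanly, and then matching it, via the Reilly formula and the bookkeeping of the constant $\tfrac{n-1}{n}$, with the bulk functional $\int_K(\|\nabla^2 u\|_2^2+\langle\nabla^2 W\,\nabla u,\nabla u\rangle)$ that appears in the hypothesis; the rest is approximation and algebra.
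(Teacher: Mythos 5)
The paper offers no proof of this statement: it is quoted verbatim from Kolesnikov and Livshyts (Lemma 2.3 of \cite{Kolesnikov2021}), and the route you propose — reduce to the second-order inequality $\phi''(0)\phi(0)\le\frac{n-1}{n}\phi'(0)^2$ at $s=0$ for smooth uniformly convex symmetric bodies, compute the second variation along Minkowski interpolation, and feed the boundary data $\tilde f$ into the weighted Neumann problem via the generalized Reilly formula of Kolesnikov--Milman — is precisely theirs. Your reductions (localization at $s=0$, smoothing of $K,L$, evenness of $u$ by symmetrization, normalization $L\bar u\equiv1$, and the $\varepsilon$-perturbation $L\mapsto L+\varepsilon B_2^n$ when $\phi'(0)=0$) are all sound. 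One small caveat: you cannot ``assume $W\in C^\infty$ by approximation,'' since the hypothesis of the theorem is tied to the given $W$ and is not inherited by approximants; but as $\nabla^2W$ appears in the hypothesis, $W$ is implicitly regular enough and no approximation of $W$ is needed.

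The genuine gap is the displayed claim $\phi''(0)=c^2\mu(K)-\int_K\bigl(\Vert\nabla^2u\Vert_2^2+\langle\nabla^2W\,\nabla u,\nabla u\rangle\bigr)\dd\mu$: this is not an identity, and it is exactly here that the substance of the lemma lives (the paper explicitly calls this theorem ``a substitute to the duality argument''). The second variation is
\[
\phi''(0)=\int_{\partial K}\left(H_{\mu}\,\tilde f^{2}-\left\langle \mathrm{II}^{-1}\nabla_{\partial K}\tilde f,\nabla_{\partial K}\tilde f\right\rangle \right)\dd\mu_{\partial K},
\]
whereas the Reilly formula for the Neumann solution $u$ reads
\[
\int_K (Lu)^2\,\dd\mu=\int_K\left(\left\Vert \nabla^{2}u\right\Vert _{2}^{2}+\left\langle \nabla^{2}W\,\nabla u,\nabla u\right\rangle \right)\dd\mu+\int_{\partial K}\left(H_{\mu}\tilde f^{2}+\left\langle \mathrm{II}\,\nabla_{\partial K}u,\nabla_{\partial K}u\right\rangle \pm2\left\langle \nabla_{\partial K}\tilde f,\nabla_{\partial K}u\right\rangle \right)\dd\mu_{\partial K},
\]
which still involves the tangential gradient $\nabla_{\partial K}u$ of the solution on the boundary; this is not determined by $\tilde f$, so no exact conversion of $\phi''(0)$ into the bulk quantity is possible. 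One must discard $\nabla_{\partial K}u$ by the pointwise duality $\pm2\langle a,b\rangle\ge-\langle\mathrm{II}^{-1}a,a\rangle-\langle\mathrm{II}\,b,b\rangle$ (legitimate since $\mathrm{II}\succ0$ after your smoothing), which turns your ``$=$'' into the inequality $\phi''(0)\le c^2\mu(K)-\int_K(\cdots)\dd\mu$. Fortunately this is the direction you need, so the scheme does close once ``$=$'' is replaced by ``$\le$'' and ``equivalent to'' by ``implied by''; but as written the step is false (equality would force $\nabla_{\partial K}\tilde f=\mp\,\mathrm{II}\,\nabla_{\partial K}u$ on $\partial K$), and since you also leave the second-variation formula unstated and label this whole matching ``the main obstacle,'' the core of the argument is asserted rather than proved.
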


Let us mention that this formulation builds upon previous ideas introduced by Kolesnikov and Milman
in \cite{Kolesnikov2018} and \cite{Kolesnikov2022}, in particular
the idea to obtain Poincaré type inequalities on the \emph{boundary}
of the given domain $K$ by expressing a function on $\partial K$
as a Neumann data of a function in the interior of $K$.

We now establish the condition in Theorem~\ref{thm:kolesnikov-livshyts} using our weighted Poincaré inequality (Theorem~\ref{thm:weighted-poincare}). Loosely speaking, we have "Theorem~\ref{thm:weighted-poincare} $\Rightarrow$ Theorem~\ref{thm:kolesnikov-livshyts} $\Rightarrow$Brunn-Minkowski", whereas previously we had "Theorem~\ref{thm:weighted-poincare} $\Rightarrow$ Theorem~\ref{thm:BL-even} $ \Rightarrow$ (B) inequality", with the difference that the formulation of Theorem~\ref{thm:BL-even} was specific to the rotationally invariant case and possibly of independent interest. 
%
\begin{proof}[Proof of Theorem \ref{thm:GZ}]
Write $W(x)=w\left(\left|x\right|\right)$ and assume by approximation
that $w$ is smooth. We begin by following the argument of Eskenazis
and Moschidis \cite{Eskenazis2021}. 

Define $r=\frac{\left|x\right|^{2}}{2n}$ and note that
\[
\left\Vert \nabla^{2}u\right\Vert _{2}^{2}=\left\Vert \nabla^{2}(u-r)\right\Vert _{2}^{2}+\frac{2}{n}\Delta u-\frac{1}{n}.
\]
 Since $Lu=1$ we have $\Delta u=\left\langle \nabla W,\nabla u\right\rangle +1$,
so 
\begin{equation}
\left\Vert \nabla^{2}u\right\Vert _{2}^{2}=\left\Vert \nabla^{2}(u-r)\right\Vert _{2}^{2}+\frac{2}{n}\left\langle \nabla W,\nabla u\right\rangle +\frac{1}{n}.\label{eq:r-decompose}
\end{equation}

Next, we apply our weighted Poincaré inequality (Theorem \ref{thm:weighted-poincare}).
Since $K$ is symmetric the measure $\mu_{K}$ is even, and of course
it is log-concave with respect to $\mu$. Moreover, for every $i$
the derivative $\partial_{i}(u-r)$ is odd, since $u-r$ is even.
Hence 
\[
\int\left|\nabla\partial_{i}(u-r)\right|^{2}\dd\mu_{K}\ge\int\frac{w'\left(\left|x\right|\right)}{\left|x\right|}\left(\partial_{i}(u-r)\right)^{2}\dd\mu_{K}.
\]
 Summing over $1\le i\le n$ we get 
\begin{align*}
\int\left\Vert \nabla^{2}(u-r)\right\Vert _{2}^{2}\dd\mu_{K} & \ge\int\frac{w'\left(\left|x\right|\right)}{\left|x\right|}\left|\nabla(u-r)\right|^{2}\dd\mu_{K}\\
 & =\int\frac{w'\left(\left|x\right|\right)}{\left|x\right|}\left(\left|\nabla u\right|^{2}-2\left\langle \nabla u,\nabla r\right\rangle +\left|\nabla r\right|^{2}\right)\dd\mu_{K}\\
 & \ge\int\frac{w'\left(\left|x\right|\right)}{\left|x\right|}\left(\left|\nabla u\right|^{2}-\frac{2}{n}\left\langle \nabla u,x\right\rangle \right)\dd\mu_{K}\\
 & =\int\left(\frac{w'\left(\left|x\right|\right)}{\left|x\right|}\left|\nabla u\right|^{2}-\frac{2}{n}\left\langle \nabla W,\nabla u\right\rangle \right)\dd\mu_{K},
\end{align*}
 where in the last equality we used the fact that $\nabla W(x)=\frac{w'\left(\left|x\right|\right)}{\left|x\right|}x$.
Therefore using (\ref{eq:r-decompose}) we obtain 
\begin{align*}
\int\left(\left\Vert \nabla^{2}u\right\Vert _{2}^{2}+\left\langle \nabla^{2}W\cdot\nabla u,\nabla u\right\rangle \right)\dd\mu_{K} & \ge\int\left\langle \left(\nabla^{2}W+\frac{w'\left(\left|x\right|\right)}{\left|x\right|}{\rm Id}\right)\nabla u,\nabla u\right\rangle \dd\mu_{K}+\frac{1}{n}\\
 & \ge0+\frac{1}{n}=\frac{1}{n}.
\end{align*}
 The last inequality is true since our assumption on $w$ implies
that the matrix $\left(\nabla^{2}W+\frac{w'\left(\left|x\right|\right)}{\left|x\right|}{\rm Id}\right)$
is nonnegative (see the computation in the beginning of Section 3). 

By Theorem \ref{thm:kolesnikov-livshyts} we conclude that 
\[
\mu\left((1-\lambda)K+\lambda L\right)^{\frac{1}{n}}\ge(1-\lambda)\mu(K)^{\frac{1}{n}}+\lambda\mu(L)^{\frac{1}{n}}.
\]
 for all symmetric $K,L$ and all $0 \le \lambda \le 1$. 
\end{proof}

\section{\label{sec:mixtures}Mixtures}

As was mentioned in the introduction, before the results of this paper
there were very few examples of measures known to have the (B) property
except the Gaussian measure. The only such examples we are aware of
in dimension $n\ge3$ come from a result of Eskenazis, Nayar and Tkocz
(\cite{Eskenazis2018}) about Gaussian mixtures. We will now briefly
explain and slightly extend their result. 
\begin{prop}
\label{prop:mixtures}Let $X=\left(X_{1},X_{2},\ldots,X_{n}\right)$
be a random vector with a probability density on $\RR^{n}$ which
is rotationally invariant and log-concave. Let $Y=(Y_{1},Y_{2},\ldots,Y_{n})$
be a random random vector on $(\RR^{+})^{n}$ independent of $X$
with probability density $h:(0,\infty)^{n}\to\RR$ such that $(s_{1},s_{2},\ldots,s_{n})\mapsto h\left(e^{s_{1}},e^{s_{2}},\ldots,e^{s_{n}}\right)$
is log-concave. Let $\nu$ denote the distribution of $\left(X_{1}Y_{1},X_{2}Y_{2},\ldots,X_{n}Y_{n}\right)$.
Then for every symmetric convex body $K\subseteq\RR^{n}$ the function
\[
(t_{1},t_{2},\ldots,t_{n})\mapsto\nu(e^{\Delta\left(t_{1},\ldots,t_{n}\right)}K)
\]
 is log-concave on $\RR^{n}$; In particular $t\mapsto\nu\left(e^{t}K\right)$
is log-concave on $\RR$. 
\end{prop}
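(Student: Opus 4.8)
The plan is to condition on the random vector $Y$ and recognize the resulting expression as a convolution of two log-concave functions on $\RR^{n}$. Write $\mu_{X}$ for the law of $X$, a rotationally invariant probability measure whose density $e^{-w(|x|)}$ is log-concave; then $w$ is increasing and convex on $[0,\infty)$, and in particular $r\mapsto w(e^{r})$ is convex, so $\mu_{X}$ satisfies the hypotheses of Theorem~\ref{thm:B-result} and its functional form \eqref{eq:functional_B_property}. Fix a symmetric convex body $K$ and set
\[
F(u):=\mu_{X}\!\left(e^{\Delta(u)}K\right),\qquad u\in\RR^{n},
\]
where $\Delta(u)=\Delta(u_{1},\dots,u_{n})$ is the diagonal matrix with entries $u_{i}$. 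Conditioning on $Y=y=(e^{s_{1}},\dots,e^{s_{n}})$, the vector $(X_{1}y_{1},\dots,X_{n}y_{n})$ lies in $e^{\Delta(t)}K$ exactly when $X\in e^{\Delta(t-s)}K$; integrating over $Y$ and substituting $y_{i}=e^{s_{i}}$ then gives
\[
\nu\!\left(e^{\Delta(t)}K\right)=\int_{\RR^{n}}F(t-s)\,\widetilde h(s)\,\dd s,\qquad
\widetilde h(s):=h\!\left(e^{s_{1}},\dots,e^{s_{n}}\right)e^{s_{1}+\cdots+s_{n}}.
\]

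First I would observe that $\widetilde h$ is log-concave on $\RR^{n}$: it is the product of the log-concave function $s\mapsto h(e^{s_{1}},\dots,e^{s_{n}})$ (the hypothesis on $h$) and the log-linear function $s\mapsto e^{s_{1}+\cdots+s_{n}}$; moreover $\int_{\RR^{n}}\widetilde h=\int_{(0,\infty)^{n}}h=1$. The real point is that $F$ is log-concave on $\RR^{n}$. To see this I would test log-concavity along an arbitrary line $u=a+\tau b$, $\tau\in\RR$: since diagonal matrices commute, $e^{\Delta(a+\tau b)}=e^{\Delta(a)}e^{\tau\Delta(b)}$, hence
\[
F(a+\tau b)=\mu_{X}\!\left(e^{\tau\Delta(b)}\bigl(e^{\Delta(a)}K\bigr)\right).
\]
Now $e^{\Delta(a)}K$ is again a symmetric convex body and $\Delta(b)$ is a symmetric matrix, so by Theorem~\ref{thm:B-result} the map $\tau\mapsto F(a+\tau b)$ is log-concave. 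As this holds on every line, $F$ is log-concave on $\RR^{n}$.

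With $F$ and $\widetilde h$ both log-concave, the function $(t,s)\mapsto F(t-s)\,\widetilde h(s)$ is log-concave on $\RR^{n}\times\RR^{n}$, so by Prékopa's theorem (\cite{Prekopa1971}) its marginal $t\mapsto\int_{\RR^{n}}F(t-s)\widetilde h(s)\,\dd s=\nu(e^{\Delta(t)}K)$ is log-concave on $\RR^{n}$ (it is finite, being $\le 1$, and positive since $\mu_{X}$ has a positive density and $K$ has nonempty interior). The last assertion follows by restricting to the diagonal line $t_{1}=\cdots=t_{n}=t$, on which $e^{\Delta(t,\dots,t)}=e^{t}\,{\rm Id}$, whence $t\mapsto\nu(e^{t}K)$ is log-concave on $\RR$.

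I do not expect a serious obstacle: the conditioning-and-convolution scheme is exactly that of Eskenazis, Nayar and Tkocz (\cite{Eskenazis2018}), and the new ingredient is simply that Theorem~\ref{thm:B-result} now supplies the (B) property for an arbitrary rotationally invariant log-concave $X$ rather than only the Gaussian. The one place needing a little care is the passage from the one-parameter statement of Theorem~\ref{thm:B-result} to the $n$-parameter log-concavity of $F$; this works precisely because the matrices $\Delta(u)$ commute, so that every line $u=a+\tau b$ factors through a single one-parameter subgroup $\tau\mapsto e^{\tau\Delta(b)}$ acting on the symmetric body $e^{\Delta(a)}K$. The remaining technical points — reducing to smooth $w$, justifying the change of variables, and checking that $\widetilde h$ is a genuine probability density — are routine, and for $w$ the reduction is already carried out inside the proof of Theorem~\ref{thm:B-result}.
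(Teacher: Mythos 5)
Your proof is correct and follows essentially the same route as the paper: condition on $Y$, change variables $y_i=e^{s_i}$ to write $\nu(e^{\Delta(t)}K)$ as an integral of $\mu_X(e^{\Delta(t\pm s)}K)$ against the log-concave weight $h(e^{s})e^{\pm\sum s_i}$, and conclude by Prékopa's theorem on marginals of log-concave functions. The only difference is cosmetic (your convolution form versus the paper's $s\mapsto -s$ parametrization), and you in fact spell out, via the commuting-diagonal-matrices restriction to lines, the passage from the one-parameter statement of Theorem~\ref{thm:B-result} to joint log-concavity that the paper leaves implicit.
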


Here we use the notation $\Delta(t_{1},\ldots,t_{n})$ for the diagonal
matrix with entries $t_{1},t_{2},\ldots,t_{n}$ on its diagonal. 
\begin{proof}
For every Borel set $K\subseteq\RR^{n}$ we have 
\begin{align*}
\nu(K) & =\PP\left(\left(X_{1}Y_{1},X_{2}Y_{2},\ldots,X_{n}Y_{n}\right)\in K\right)\\
 & =\int_{(0,\infty)^{n}}\PP\left(\left(y_{1}X_{1},y_{2}X_{2},\ldots,y_{n}X_{n}\right)\in K\right)h(y)\dd y
\end{align*}
 We perform a change of variables $e^{-s}=y$ (i.e. $e^{-s_{i}}=y_{i}$
for $1\le i\le n$). Then $\dd y=e^{-\sum_{i=1}^{n}s_{i}}\dd s$,
so 
\begin{align*}
\nu(K) & =\int_{\RR^{n}}\PP\left(e^{-\Delta(s_{1},\ldots,s_{n})}\cdot X\in K\right)h(e^{-s})e^{-\sum s_{i}}\dd s\\
 & =\int_{\RR^{n}}\PP\left(X\in e^{\Delta(s_{1},\ldots,s_{n})}K\right)h(e^{-s})e^{-\sum s_{i}}\dd s\\
 & =\int_{\RR^{n}}\mu\left(e^{\Delta(s_{1},\ldots,s_{n})}K\right)\cdot h(e^{-s})e^{-\sum s_{i}}\dd s,
\end{align*}
 where $\mu$ denotes the distribution of $X$. 

Therefore if we now assume that $K$ is a symmetric convex body, then
\[
\nu(e^{\Delta\left(t_{1},\ldots,t_{n}\right)}K)=\int_{\RR^{n}}\mu\left(e^{\Delta(s_{1}+t_{1},\ldots,s_{n}+t_{n})}K\right)\cdot h(e^{-s})e^{-\sum s_{i}}\dd s.
\]
By Theorem \ref{thm:B-result} the function $(t,s)\mapsto\mu\left(e^{\Delta(s_{1}+t_{1},\ldots,s_{n}+t_{n})}K\right)$
is log-concave on $\RR^{2n}$, so by our assumption on $h$ 
\[
(t,s)\mapsto\mu\left(e^{\Delta(s_{1}+t_{1},\ldots,s_{n}+t_{n})}K\right)h(e^{-s})e^{-\sum s_{i}}
\]
 is also log-concave. It is a well-known corollary of the Prékopa
inequality (\ref{eq:BM0}) that marginals of log-concave functions
are also log-concave. Hence the function 
\[
t\mapsto\int_{\RR^{n}}\mu\left(e^{\Delta(s_{1}+t_{1},\ldots,s_{n}+t_{n})}K\right)\cdot h(e^{-s})e^{-\sum s_{i}}\dd s=\nu(e^{\Delta\left(t_{1},\ldots,t_{n}\right)}K)
\]
is log-concave.
\end{proof}
The result of \cite{Eskenazis2018} is identical to the proposition
above, with an identical proof, except the fact that they have to
assume $X$ is Gaussian in order to use the original result of \cite{Cordero-Erausquin2004},
while we can use instead Theorem \ref{thm:B-result}. Of course, the
assumption in the proposition that the distribution $\mu$ of $X$
is log-concave may be replaced with the weaker assumption on $\mu$
of Theorem \ref{thm:B-result}. 

Proposition \ref{prop:mixtures} is only useful if one can identify
measures $\nu$ which satisfy its assumptions. It is shown in \cite{Eskenazis2018}
that if $0<p\le1$ and if $\nu$ has density proportional to $e^{-\left\Vert x\right\Vert _{p}^{p}}=e^{-\sum\left|x_{i}\right|^{p}}$,
then $\nu$ satisfies the assumptions of the proposition (with a Gaussian
random vector $X$) and therefore has the (B) property. The same is
shown for the product measure $\nu=\nu_{1}^{\otimes n}$, where $\nu_{1}$
is the distribution of a $p$-stable random variable for $0<p\le1$.
No other examples are constructed.

Since we now have more freedom in the choice of $X$, our proposition
applies to more measures $\nu$ than the theorem of \cite{Eskenazis2018}.
However, at the moment we don't have any natural measure to propose
that could be handled using this extra freedom.

\begin{funding}
The second author is partially supported by ISF grant 1468/19 and BSF grant 2016050.
\end{funding}


\end{document}